\documentclass{amsart}
\usepackage{geometry}
\usepackage{amsmath,amsfonts,amssymb}
\usepackage{amsthm}
\usepackage{ifthen}
\usepackage{longtable}
\usepackage{stmaryrd}
\usepackage{array}
\usepackage{url}
\usepackage{verbatim}
\usepackage{booktabs}
\usepackage[table]{xcolor}
\usepackage{longtable}

\usepackage{color}
\definecolor{darkgreen}{rgb}{0,0.5,0}
\usepackage[colorlinks, citecolor=darkgreen, backref]{hyperref}

\newcommand{\Z}{\mathbb{Z}}
\newcommand{\Q}{\mathbb{Q}}
\newcommand{\R}{\mathbb{R}}
\newcommand{\C}{\mathbb{C}}

\newcommand{\ord}{\mathcal{O}}

\newcommand{\abs}[1]{\left|#1\right|}

\newcommand{\LL}{\mathcal{L}}

\newcommand{\NK}{N_K}
\newcommand{\eps}{\epsilon}
\newcommand{\CLone}{A_L}
\newcommand{\CLtwo}{B_L}
\newcommand{\CLtwofour}{C_L}

\newcommand{\numquinticsols}{255}

\DeclareMathOperator{\Gal}{Gal} 
\DeclareMathOperator{\Reg}{Reg}

\DeclareMathOperator{\Trace}{Tr}

\DeclareMathOperator{\Hom}{Hom}

\newtheorem*{theorem*}{Theorem}
\newtheorem{theorem}{Theorem}
\newtheorem{lemma}{Lemma}
\newtheorem{corollary}{Corollary}
\newtheorem{proposition}{Proposition}
\newtheorem{conjecture}{Conjecture}

\theoremstyle{remark}

\begin{document}
\title[Sum of two units]{On integers that are representable as the sum of two units}
\subjclass[2020]{11D61, 11D45, 11J86} 
\keywords{Unit equation, sums of units, Baker's bounds}
\thanks{R. V. was supported by {Charles University} programme PRIMUS/24/SCI/010 and {Charles University} programme UNCE/24/SCI/022.}

\author[R. Visser]{Robin Visser}
\address{R. Visser, Charles University\\ Faculty of Mathematics and Physics\\ Department of Algebra \\ Sokolovsk\'{a} 83\\ 186 75 Praha 8\\ Czech Republic}
\email{robin.visser@matfyz.cuni.cz}

\author[V. Ziegler]{Volker Ziegler}
\address{V. Ziegler,
University of Salzburg,
Hellbrunnerstrasse 34,
A-5020 Salzburg, Austria}
\email{volker.ziegler\char'100plus.ac.at}

\begin{abstract}
Let $K$ be a number field of degree $D$ with maximal order $\ord_K$. We show that under certain conditions on $K$, which in particular are always satisfied if $D$ is odd or if $D\geq 3$ and $K$ is primitive, the set of positive integers $\NK$ that can be expressed as a sum of two units in $\ord_K^*$ is a finite effectively computable set.  This result partially resolves an open problem posed by Tinkov\'{a}, Yatsyna, and the first author.
We illustrate our method by explicitly computing $\NK$ for the smallest totally real quintic field $K$ with Galois group $S_5$.
\end{abstract}

\maketitle

\section{Introduction}

Let $K$ be a number field and $\ord_K$ be its maximal order. We denote by $\NK$ the set of all positive integers $n$ such that $n=\eps+\delta$ with units $\eps,\delta \in \ord_K^*$. Recently, it has been shown by Tinkov\'{a}, Yatsyna, and the first author \cite{Tinkova:2025} that $N_K$ is infinite if and only if $K$ contains a real quadratic subfield. In fact, let $L\subseteq K$ be a real quadratic subfield and let $\eps\in\ord_L^*$ be a unit in $L$. Then all integers of the form
\begin{equation} \label{eq:realquadsols}
    n=\eps^k+\bar\eps^k=\Trace_{L/\Q}(\eps^k),
\end{equation}
where $\bar \eps$ is the (Galois) conjugate of $\eps$ and $k$ is some integer, are contained in $N_K$. 

Let $N_K^{\text{quad}}$ be the set of all positive integers $n$ such that there exists an $\eps\in \ord_K^*$ with $\Q(\eps)$ a real quadratic field such that $n=\Trace_{\Q(\eps)/\Q}(\eps)$. Then we write
$\NK^- = \NK\setminus \NK^{\text{quad}}$. Although the methods in \cite{Tinkova:2025} show that $N_K^-$ is finite for all number fields $K$, their proof relies on the finiteness of solutions to unit equations in several unknowns and thus cannot effectively determine the set $N_K^-$ for an arbitrary number field $K$.

The main result in this paper is to provide an effective upper bound for the largest member of $N_K^-$. Let us denote by $h(\cdot)$ the absolute logarithmic Weil height (for a definition see Section \ref{sec:preliminaries}), then we prove the following:

\begin{theorem}\label{th:gen-bound}
 Let $K$ be a given number field. 
 Assume that for any nonzero rational integer $n$,
 there does not exist a unit $\eps \in \ord_K^* \backslash \Q$ such that $n - \eps$ is a Galois conjugate of $\eps$.
 If $n\neq 0$ is a rational integer and $\eps,\delta \in \ord_K^*$ are units such that $n=\eps+\delta$, then
 $$\max\{\log |n|,h(\eps),h(\delta)\}\leq C$$
 where $C$ is an effectively computable constant depending only on $K$.
\end{theorem}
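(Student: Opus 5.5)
We work in the Galois closure $M$ of $K$ and take all $D$ embeddings $\sigma_1,\dots,\sigma_D$ of $K$. Conjugating $n=\eps+\delta$ gives
\[ n=\sigma_i(\eps)+\sigma_i(\delta)\qquad (i=1,\dots,D). \]
So the quantities $\eps_i:=\sigma_i(\eps)$ and $\delta_i:=\sigma_i(\delta)$ are units of $\ord_M$ with $\eps_i+\delta_i=n$. Eliminating $n$ between two indices $i\neq j$ yields the four-term relation $\eps_i+\delta_i-\eps_j-\delta_j=0$. This is not a two-term unit equation, so Baker's method does not apply directly. The plan is to produce a genuine three-term $S$-unit equation, with $S$ the infinite places only, from which the heights follow effectively (Baker/Győry type bounds for $x+y=1$ in units of $\ord_M$). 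The natural candidate is Siegel's identity applied to three indices $i,j,k$:
\[ (\eps_i-\eps_j)(\delta_k-\delta_j)\ \text{vs.}\ \ldots \]
Rather than that, I would argue directly. Since $\eps_i-\eps_j=\delta_j-\delta_i$, the differences of conjugates of $\eps$ and of $\delta$ coincide up to sign. From $\delta_i=n-\eps_i$, every conjugate of $\delta$ is a unit. The identity
\[ \frac{\eps_i}{\eps_j}\cdot\frac{n-\eps_i}{n-\eps_j}^{-1}\ \]
suggests the following route. Let $P(X)=\prod_i(X-\eps_i)$ be the characteristic polynomial of $\eps$ over $K/\Q$. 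Then $P(0)=\pm1$ and $P(n)=\pm N_{K/\Q}(\delta)=\pm1$.

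The key step is to separate according to the archimedean sizes. Fix the embedding $i_0$ where $|\delta_{i}|$ is maximal, and assume $\max\{\log|n|,h(\eps),h(\delta)\}$ is large.
\begin{itemize}
\item If $|n|$ is large compared with the heights, then at some embedding both $\eps_i$ and $\delta_i$ are of size roughly $n$. At embeddings where $\eps_i$ is small (such embeddings exist, since the product of the $|\eps_i|$ is $1$), $\delta_i=n-\eps_i$ is close to $n$.
\item Take two embeddings $i,j$ where $|\eps_i|,|\eps_j|$ are both small. Then $\delta_i/\delta_j=(n-\eps_i)/(n-\eps_j)$ is extremely close to $1$. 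This gives a small linear form in logarithms,
\[ \Lambda=\log|\delta_i/\delta_j|=\sum_k b_k\log|\sigma_i(\eta_k)/\sigma_j(\eta_k)|, \]
in a fixed fundamental system $\eta_1,\dots,\eta_r$.
\end{itemize}
Baker (Matveev) bounds then make $\Lambda$ at least $\exp(-c\log B)$, where $B=\max|b_k|$. Comparing with the upper bound $\ll |\eps_i|/|n|$ gives an estimate of the familiar shape: $\min_i\log|\eps_i|$ is controlled by $\log B$. Iterating over embeddings in the usual Baker–Davenport / Bugeaud–Győry style would bound $h(\delta)$, and therefore $h(\eps)$ and $\log|n|$.

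The main obstacle is the degenerate case where $\Lambda=0$, i.e.\ $\delta_i/\delta_j$ is a root of unity of absolute value one, or more generally where the small linear form vanishes identically for all admissible choices. This is exactly where the hypothesis enters. If $\eps_i$ and $\eps_j$ are small and $\delta_i=\zeta\delta_j$, then $n-\eps_i=\zeta(n-\eps_j)$. Pushing the vanishing through all embeddings should force $\eps$ to be related to a conjugate $\sigma(\eps)$ with $\eps+\sigma(\eps)=n$, i.e.\ $n-\eps$ is a Galois conjugate of $\eps$, which the hypothesis excludes. The quadratic subfield case $\eps+\bar\eps=n$ is the prototype of this failure. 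I would first try to show that if every such linear form vanishes, then the map $i\mapsto$ (the index $j$ with $\delta_i=\eps_j$ up to root of unity) defines a Galois automorphism. For this I would use that the multisets $\{\eps_i\}$ and $\{n-\eps_i\}$ coincide once all the ratios are trivial, and then deduce $n-\eps=\tau(\eps)$. Handling partial degeneracy, where only some of the forms vanish, by a careful choice of the pair $i,j$ is where I expect most of the technical work.
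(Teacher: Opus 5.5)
Your proposal has a genuine gap, and it sits exactly where the hypothesis has to enter. The only small linear form you actually build is $\log(\delta_i/\delta_j)$ for two embeddings at which $\eps_i,\eps_j$ are both small, plus its mirror image with $\eps,\delta$ swapped. It has to be taken with complex logarithms. With $\log|\cdot|$ it vanishes whenever $|\delta_i|=|\delta_j|$, e.g.\ for complex conjugate embeddings, which says nothing about roots of unity. With complex logarithms it vanishes only when $\delta_i=\delta_j$, i.e.\ $\eps_i=\eps_j$, i.e.\ when $\sigma_i,\sigma_j$ agree on $\Q(\eps)$. That degeneracy is trivial, disappears once you work with the embeddings of $L=\Q(\eps)$ as the paper does, and has nothing to do with the hypothesis. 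So if your sketch were complete it would prove the theorem with no hypothesis at all, which is false. For a real quadratic unit $\eps$ with $\delta=\bar\eps$ and $n=\Trace(\eps)$, there is exactly one embedding of $L$ with $\eps$ small and one with $\delta$ small, so none of your forms exists.

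The same vacuity occurs in size patterns the hypothesis does not rule out a priori. Take $L$ cubic with $(|\eps^{(1)}|,|\eps^{(2)}|,|\eps^{(3)}|)\approx(|n|,|n|^{-2},|n|)$ and $(|\delta^{(1)}|,|\delta^{(2)}|,|\delta^{(3)}|)\approx(|n|^{-2},|n|,|n|)$. Here $\eps^{(1)}\approx n$ only gives a form containing $\log n$, which is useless because $\log|n|$ is what you are trying to bound. Nothing in these sizes forces $\delta$ to be a conjugate of $\eps$. So your planned deduction of conjugacy from the vanishing, or absence, of your forms cannot work.

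The first missing ingredient is the cross term. In $\eps^{(i)}+\delta^{(i)}-\eps^{(j)}-\delta^{(j)}=0$ the two dominant terms can be $\eps^{(i)}$ and $\delta^{(j)}$ at \emph{different} embeddings. Then $|\delta^{(j)}/\eps^{(i)}-1|\le 2m/|\eps^{(i)}|$, where $m$ is the second smallest of the four absolute values. The corresponding form is nonzero precisely because $\eps^{(i)}\neq\delta^{(j)}$, i.e.\ because $\delta=n-\eps$ is not a conjugate of $\eps$; this is where the hypothesis is used. In the cubic pattern above it gives $|\eps^{(1)}/\delta^{(2)}-1|\approx|n|^{-3}$, hence $\log|n|\ll\log X$. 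The paper treats all possible orderings uniformly in Lemma~\ref{lem:matching}: whenever the four units are pairwise distinct, $m\ge|\eps^{(i)}|\exp(-C_4\log X)/(4r)$ unless $X\le C_5$.

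The second missing ingredient replaces your step ``iterating over embeddings in the usual Baker--Davenport / Bugeaud--Gy\H{o}ry style.'' That is not an available tool, since four-term unit equations are not effectively solvable in general. The paper uses instead the norm argument of Proposition~\ref{prop:matching}. It chains two such relations through the index minimizing $|\delta^{(j)}|$ to get $|\eps^{(d)}|\ge C^2|\eps^{(1)}|$, or an analogous lower bound for $\min_j|\delta^{(j)}|$. Then $|N_{L/\Q}(\eps)|=1$ (or the norm of $\delta$) gives $|\eps^{(1)}|\le C^{-2}$, and Lemma~\ref{lem:X-eps-bound} yields $X\ll\log X$.

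Finally, your pair form bounds only the larger of $|\eps_i|,|\eps_j|$ from below, not $\min_i|\eps_i|$ as you state.
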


To give an explicit example of such a constant $C$, we prove that we can take the constant $C$ to be:
\begin{equation*}
    C := \max_{L \subseteq K} \Big\{ 2 \CLone^2 \CLtwo \max \big\{  e \cdot r_L t_L ( \CLone \CLtwo + 1), \; 4 \CLtwo \log(4r_L) + 4 \CLtwofour \log ( 2 \CLtwofour ) \big\} \Big\} ,
\end{equation*}
where the maximum is taken over all subfields $L$ of $K$, and where for each subfield $L \subseteq K$, we define the real constants $\CLone, \CLtwo, \CLtwofour$ as:
\begin{align*}
    \CLone &:= t_L \cdot \max_{i = 1, \dots, t_L} \left\{ \log \abs{ \eta_i^{(1)} } \right\}, \quad
    \CLtwo := \frac{t_L}{\Reg_L} \cdot \max_{1 \leq i, j \leq t_L} \{ \abs{ \det R_{L,i,j} } \}, \quad 
    \text{and} \\
    \CLtwofour &:= \CLtwo \cdot 2^{12t_L + 27} D^2 (D-1)^2 \big( 1 + \log (D(D-1)) \big) \cdot \pi \left(  \prod_{k=1}^{t_L} h_m(\eta_k)  \right)^2 ,
\end{align*}
where $D$ is the degree of $K$, $t_L$ is the unit rank of $\ord_L^*$, $r_L$ is the order of the torsion subgroup of $\ord_L^*$,   $\eta_1, \eta_2, \dots, \eta_{t_L} \in \ord_L^*$ are a fundamental system of units for $L$, $\Reg_L$ is the regulator of $L$, $R_{L,i,j}$ is the matrix formed by removing the $i$-th column and $j$-th row from the matrix $R_L := (\log | \eta_i^{(j)} | )_{1\leq i,j\leq t_L}$, and $h_m(\cdot)$ denotes the modified height, as defined in Section~\ref{sec:preliminaries}.

As a consequence, we can obtain constraints on the Galois groups of such number fields $K$ satisfying the conditions of Theorem~\ref{th:gen-bound}, and can show the following corollary:

\begin{corollary} \label{cor:galgroup}
    Let $K$ be a number field.
    Assume that for every subfield $L \subseteq K$,
    either its absolute degree $d = [L : \Q]$ is odd,
    or the Galois group of its normal closure, viewed as a transitive permutation group of degree $d$, 
    is not contained in the wreath product $C_2 \wr S_{d/2}$. 
    Then, for any nonzero rational integer $n$ and units $\eps, \delta \in \ord_K^*$ satisfying $\eps + \delta = n$, we have
    \begin{equation*}
        \max\{\log |n|,h(\eps),h(\delta)\}\leq C,
    \end{equation*}
    where $C$ is an effectively computable constant depending only on $K$, as given in Theorem~\ref{th:gen-bound}.
    In particular, there exists an effectively computable algorithm to determine $\NK$ if $D = [K : \Q]$ is odd, or if $D \geq 3$ and $K$ is primitive.
\end{corollary}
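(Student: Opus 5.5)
The plan is to check that the Galois-theoretic hypothesis of Corollary~\ref{cor:galgroup} implies the hypothesis of Theorem~\ref{th:gen-bound}, and then apply the theorem. We argue by contraposition. Suppose there are a nonzero $n\in\Z$ and a unit $\eps\in\ord_K^*\setminus\Q$ such that $n-\eps$ is a Galois conjugate of $\eps$. Put $L=\Q(\eps)\subseteq K$ and $d=[L:\Q]\geq 2$. We will show that $d$ is even and that the Galois group of the normal closure of $L$ lies in $C_2\wr S_{d/2}$. This contradicts the assumption for the subfield $L$, so Theorem~\ref{th:gen-bound} applies and yields the bound with the stated constant $C$.

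First, the roots of $f$ pair up. Let $f$ be the minimal polynomial of $\eps$, with roots $\eps_1=\eps,\dots,\eps_d$, and let $G$ be the Galois group of its splitting field $M$, acting on these roots. Consider the map $\tau(x)=n-x$. Since $n-\eps$ is a root of $f$, it has the form $n - \eps = g_0(\eps)$ for some $g_0 \in G$. For any root $g(\eps)$ we then get
\[
n-g(\eps)=g(n-\eps)=g g_0(\eps),
\]
which is again a root. Hence $\tau$ permutes the roots of $f$. It is an involution. It has no fixed point, because $\eps_i=n/2$ would be rational, contradicting irreducibility with $d\geq 2$. Therefore the roots split into $d/2$ disjoint pairs $\{x,n-x\}$, and in particular $d$ is even.

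Second, $G$ respects this pairing. Since $\tau$ is defined over $\Q$, every $g\in G$ commutes with it: $g(n-x)=n-g(x)$. So $G$ maps pairs to pairs, and the $d/2$ pairs form a $G$-invariant block system with blocks of size $2$. The full stabilizer in $S_d$ of such a block system is $C_2\wr S_{d/2}$, so $G\leq C_2\wr S_{d/2}$ as a transitive permutation group of degree $d$. This is the desired contradiction.

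For the final assertion we check the hypothesis in two cases. If $D$ is odd, every subfield has degree dividing $D$, hence odd. If $D\geq 3$ and $K$ is primitive, the only subfields are $\Q$, of odd degree $1$, and $K$ itself. If $D$ is even we may assume $D \geq 4$. Containment of $\Gal$ in $C_2\wr S_{D/2}$ would give a nontrivial block system with blocks of size $2$ and $D/2\geq 2$ blocks. The Galois group would then be imprimitive, and $K$ would have a subfield of degree $D/2$, which is impossible.

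In both cases $K$ has no real quadratic subfield. By \cite{Tinkova:2025} $\NK$ is therefore finite; more directly, Theorem~\ref{th:gen-bound} bounds $\log n$, $h(\eps)$ and $h(\delta)$ by $C$. Units of bounded height in $\ord_K^*$ form an effectively enumerable finite set, since the exponents with respect to a fundamental system of units are bounded via the regulator. So $\NK$ is obtained by testing which sums $\eps+\delta$ of such units are positive rational integers.

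The main point needing care is the claim that $\tau$ permutes the full root set, which is exactly the computation $n-g(\eps)=g g_0(\eps)$ above. The rest is routine.
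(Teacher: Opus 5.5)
Your argument is correct and is essentially the paper's: the fixed-point-free involution $x\mapsto n-x$ on the roots of the minimal polynomial of $\eps$ is exactly the pairing used in the proof of Lemma~\ref{lem:galoisgroup}, and combining it with Theorem~\ref{th:gen-bound} (i.e.\ Proposition~\ref{prop:height-bound}) gives the bound. The only minor difference is in the primitive case. There you pass from a block system with blocks of size $2$ to a subfield of degree $D/2$ via the Galois correspondence, whereas the paper uses Lemma~\ref{lem:subfield}, which exhibits the index-$2$ subfield explicitly as $\Q\bigl((\eps-n/2)^2\bigr)$; both routes work.
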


Unfortunately we are unable to resolve the general number field case.  However, apart from the infinite family of conjugate real quadratic units given in \eqref{eq:realquadsols}, we conjecture that all other solutions to the unit equation $\eps + \delta = n$ can be effectively bounded:

\begin{conjecture}
 Let $K$ be a number field. Then there exists an effectively computable constant $C'$ depending only on $K$, with the following property.
 Let $n$ be a nonzero rational integer and let $\eps, \delta \in \mathcal{O}_K^*$ be units satisfying $\eps + \delta = n$. Assume that $\eps$ and $\delta$ do not arise as in \eqref{eq:realquadsols}, that is, assume $\Q(\eps)$ is not a real quadratic field with $\delta$ the Galois conjugate of $\eps$. Then 
\begin{equation*}
    \max\{\log |n|,h(\eps),h(\delta)\}\leq C' .
\end{equation*}
In particular, $\NK^-$ is an effectively computable finite set.
\end{conjecture}

We demonstrate the strength of our method by computing $\NK$ for a given degree 5 example:

\begin{theorem}\label{th:quintic}
 Let $K=\Q(\alpha)$, where $\alpha$ is a root of $X^5-5X^3-X^2+5X+1$. Then there are exactly $\numquinticsols$ non-equivalent pairs of units $(\eps, \delta)$ such that $\eps + \delta$ is a non-zero rational integer. In particular, we have
 $$\NK=\{1,2,3,4,5,6,7,10,11,15,251\}.$$
\end{theorem}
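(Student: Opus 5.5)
The plan is to apply Theorem~\ref{th:gen-bound} (or Corollary~\ref{cor:galgroup}) to this quintic field, obtain an explicit bound, shrink it by lattice reduction, and finish with an exhaustive search.

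\textbf{Step 1: the theorem applies.} The degree $D=5$ is odd, and the only subfields of $K$ are $\Q$ and $K$, since $K$ has Galois group $S_5$ and is primitive. So Corollary~\ref{cor:galgroup} applies. Directly, there is no unit $\eps\notin\Q$ such that $n-\eps$ is a conjugate of $\eps$: that relation would give an involution-like symmetry forcing an even-degree subfield structure, which odd degree excludes.

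\textbf{Step 2: an explicit initial bound.} I would compute a fundamental system $\eta_1,\dots,\eta_4$ of $\ord_K^*$; the field is totally real, so $t_K=4$ and $r_K=2$. From these I get the regulator, the minors $R_{K,i,j}$ and the modified heights $h_m(\eta_k)$. This gives $\CLone,\CLtwo,\CLtwofour$, hence the constant $C$ for $L=K$. For $L=\Q$ the only units are $\pm1$, so $\eps+\delta\in\{0,\pm2\}$. The result is something like $h(\eps)\le 10^{30}$, so the exponents satisfy $\max_i|a_i|\le B_0$ with $B_0$ astronomically large.

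\textbf{Step 3: reduction (the main obstacle).} Write $\eps=\pm\prod\eta_i^{a_i}$ and $\delta=\pm\prod\eta_i^{b_i}$. Choose an embedding in which $|\eps^{(j)}|$ is small while $|n|$ is large. Then $\delta^{(j)}=n-\eps^{(j)}$ is close to $n$, and conjugating the relation gives a linear form in logarithms of the $\eta_i^{(k)}$ (and of $n$) that is exponentially small in $\max|a_i|$. I would apply LLL or de Weger reduction to the associated lattice, with weights of size about $B_0^{4}$. This should cut the bound to a few hundred, and iterating the reduction should bring it to roughly $|a_i|,|b_i|\le 50$. The delicate part is handling the several cases of which embedding is small, and making sure $n$ can be eliminated. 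To eliminate $n$, I would use two embeddings: $\eps^{(j)}+\delta^{(j)}=\eps^{(k)}+\delta^{(k)}$ gives a unit equation in $K^{(j)}K^{(k)}$ without $n$. Sieving with Fincke--Pohst enumeration would handle the cases where the reduced bound stalls.

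\textbf{Step 4: final enumeration.} In the remaining box, I would search for units $\eps$ with $n-\eps$ a unit by checking that $N_{K/\Q}(n-\eps)=\pm1$ for integer $n$. In practice I would instead use the bound on $|\eps^{(j)}|$: enumerate the $\eps$ with bounded height whose trace data make $\eps+\delta$ rational, and test whether $\Trace(\eps+\delta)/5$ is an integer $n$ with $n-\eps$ a unit. Counting the resulting pairs up to equivalence (swapping $(\eps,\delta)$, changing sign, and Galois action if applicable) should give \numquinticsols\ pairs, and reading off the positive values of $n$ gives $\NK$.
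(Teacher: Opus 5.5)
Your plan is sound in outline and matches the paper through the reduction step. The real difference is in how you finish.

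\textbf{Reduction.} Theorem~\ref{th:gen-bound} applies since $[K:\Q]=5$ is odd. The explicit constants give $\log|\eps^{(1)}|<1.91\cdot 10^{34}$, and the reduction is carried out on $n$-free linear forms from the pairwise equations \eqref{eq:UEQ-Sys}. What you leave as ``the delicate part'' is settled by Lemma~\ref{lem:matching} and Proposition~\ref{prop:matching}. In each equation $\eps^{(i)}+\delta^{(i)}=\eps^{(j)}+\delta^{(j)}$, either the second-smallest term is at least $C|\eps^{(i)}|$, or one of $\Lambda_1^{(i,j)}$, $\Lambda_2^{(i,j)}$, $\Lambda_3^{(i)}$ from \eqref{eq:L1}--\eqref{eq:L3} is tiny. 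Lower bounds for all three from Lemma~\ref{lem:real-reduce} produce $C$, and Proposition~\ref{prop:matching} converts this into $|\eps^{(1)}|\le C^{-2}$; that is the step being iterated.

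Three adjustments to your sketch:
\begin{itemize}
\item The single-embedding form containing $\log n$ plays no role: with $n$ unknown it cannot be put into a lattice.
\item $\Lambda_2^{(i,j)}$ involves all eight exponents of $\eps$ and $\delta$, so its lattice constant must be of order $X^8$, not $B_0^4$. The paper takes $10^{290}$ here against $10^{150}$ for the four-variable forms.
\item The iteration does not get down to a box of side about $50$. It stalls at $\log|\eps^{(i)}|,\log|\delta^{(i)}|\le 111.49$, leaving about $1.68\cdot 10^9$ exponent vectors.
\end{itemize}

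\textbf{Finish.} Here you genuinely differ. The paper never searches the box for solutions directly. For each $\eps$ in the box it takes $n$ to be the nearest integer to a conjugate and excludes all $n>1000$ by the floating-point necessary condition \eqref{eq:Test-eps}. It then solves the $1000$ equations $\eps+\delta=n$ with $1\le n\le 1000$ using Magma's \texttt{UnitEquation} (Wildanger's method), which yields the $\numquinticsols$ classes.

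Your first variant is a legitimate one-pass replacement once you say which $n$ to test. Since $\prod_i|n-\eps^{(i)}|=|N_{K/\Q}(\delta)|=1$, some $i$ has $|n-\eps^{(i)}|\le 1$, so each $\eps$ yields at most ten candidates, each checked exactly. This finds every solution for every $n$ without an external unit-equation solver. The price is exact (or rigorously rounded) norm evaluations for about $1.7\cdot 10^9$ units of size up to $e^{111}$. The paper's split instead uses a cheap inequality for large $n$ and a specialised solver for small $n$.

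Your ``in practice'' variant is not workable. Forming $\Trace(\eps+\delta)/5$ means enumerating pairs $(\eps,\delta)$, roughly $3\cdot 10^{18}$ of them. It is also unnecessary, since $\delta=n-\eps$ is determined once $n$ is.
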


Here, two solutions $(\eps_1, \delta_1, n_1)$ and $(\eps_2, \delta_2, n_2)$ are considered equivalent if either $(\eps_1, \delta_1, n_1) = (\pm \sigma(\eps_2), \pm \sigma(\delta_2), \pm n_2)$ or $(\eps_1, \delta_1, n_1) = (\pm \sigma(\delta_2), \pm \sigma(\eps_2), \pm n_2)$ for some $\sigma \in \Gal(K/\Q)$.
A full list of all solutions is given at the end of paper in Table~\ref{tab:quinticsolutions}. Note that the field $K$ in Theorem~\ref{th:quintic} has LMFDB label 5.5.24217.1 \cite{lmfdb_quintic} and is the smallest totally real quintic field, whose Galois closure has Galois group isomorphic to $S_5$ (see the number field tables due to Diaz y Diaz \cite{Diaz:1991}).

\bigskip

We briefly recall some known results about the sets $\NK$. In the case of prime cyclotomic fields $K = \Q(\zeta_p)$, Newman \cite[p.~89]{Newman:1974} posed the question of explicitly determining $\NK$, observing that $\{1, 2, 3\} \subseteq N_{\Q(\zeta_p)}$ for all primes $p > 3$.  Kostra \cite{Kostra:1994} and Newman \cite{Newman:1993} independently showed that $kp \not \in N_{\Q(\zeta_p)}$ for all primes $p \geq 3$ and all integers $k \in \Z$.

Several papers have studied the related problem of classifying the number fields $K$ such that $1 \in \NK$.  In a series of papers, culminating in \cite{Nagell:1969}, Nagell classified all fields $K$ of unit rank 0 or 1 with the property that $1 \in \NK$. Triantafillou \cite{Triantafillou:2021} showed that if 3 splits completely in a number field $K$ and $3 \nmid [K : \Q]$, then $1 \not \in \NK$.  Recently, Freitas--Kraus--Siksek \cite{FKS:2021} have shown that, for any given prime $p \not = 3$, there are only finitely many cyclic degree $p$ fields $K$ such that $1 \in \NK$.  We refer the reader to \cite[Section~5.5]{EvertseGyory2015} and the references therein for further background.

There has been some considerable recent progress on the equation $\eps + \delta = n$ particularly over cubic number fields. Vukusic and the second author \cite{VZ} computed the solutions to $\eps + \delta = n$ over the family of Shanks' simplest cubic fields $K_a = \Q(\rho_a)$ where $\rho_a$ is a root of the cubic polynomial $f(x) = x^3 - ax^2 - (a+3)x - 1$.  In particular, they classified all solutions where $\eps, \delta \in \Z[\rho_a]^*$ under the assumption that $n \leq \max (|a|^{1/3}, 1)$.  More recently, Komatsu \cite{Komatsu} as well as Tinkov\'{a}, Yatsyna, and the first author \cite{Tinkova:2025} computed $\NK$ for all such cyclic cubic fields $K$, proving that if $n = \eps + \delta$ for some units $\eps, \delta \in \ord_K^*$ in a cubic cyclic field $K$, then $n \in \{1, 2, 3, 4, 5, 7, 19, 22 \}$.  A similar theorem for complex cubic fields was also given in \cite[Theorem~1.3]{Tinkova:2025}. We also mention recent work of Khawaja and Siksek \cite{KhawajaSiksek2026} which studies the unit equation over cubic fields from a statistical point of view, showing that for any fixed integer $n \not \in \{-2, 0, 2\}$, the equation $\eps + \delta = n$ has no solutions for $\eps, \delta \in \mathcal{O}_K^*$ for 100\% of cubic fields $K$, when ordered by discriminant.

\bigskip
The paper is structured as follows. In Section~\ref{sec:preliminaries}, we begin by stating some various preliminary results regarding linear forms in logarithms and LLL basis reduction.  In Section~\ref{sec:unit_matching}, we apply the recent unit matching techniques of Bajpai and Bennett \cite{Bajpai:2023} to bound the heights of $\eps, \delta$ and give a proof of Theorem~\ref{th:gen-bound}. In Section~\ref{sec:galoisgroups}, we prove some results which constrain the possible Galois groups of fields $K$ not satisfying the conditions of Theorem~\ref{th:gen-bound}, thus proving Corollary~\ref{cor:galgroup}.  Finally, in Section~\ref{sec:quintic_example}, we apply Theorem~\ref{th:gen-bound} along with LLL-reduction to the smallest degree 5 field $K = \Q(\alpha)$ with Galois group $S_5$, thereby giving a proof of Theorem~\ref{th:quintic}.

\bigskip
We are grateful to Magdal\'{e}na Tinkov\'{a} and Pavlo Yatsyna for many useful comments and suggestions on an earlier draft of this paper.

\section{Preliminaries}\label{sec:preliminaries}

In this section we collect some helpful results for the proof of our main theorems. Let us start with the notion of height. Assume that $\alpha \in \overline{\Q}$ is an algebraic number with minimal polynomial
$$a \prod_{i=1}^d (X-\alpha_i) \in \Z[X].$$
Then we call
$$
h(\alpha):=\frac 1d \left(\log |a| +\sum_{i=1}^d \max\{0,\log |\alpha_i|\}\right)
$$
the \emph{absolute logarithmic Weil height} of $\alpha$. 
We furthermore call
\begin{equation*}
    h_m(\alpha) := 
    \max \left\{  d h(\alpha), \, | \log \alpha_i |, \, 0.16  \right\}
\end{equation*}
the \emph{modified height} of $\alpha$ .

\subsection{Linear forms in logarithms}
To find the effective bounds in our main theorems we will apply lower bounds for linear forms in logarithms. In particular, for concrete computations we will apply the effective bounds found by Matveev \cite[Corollary~2.3]{Matveev:2000}:

\begin{lemma}\label{lem:matveev}
  Denote by $\alpha_1, \dots, \alpha_N$ algebraic numbers, not $0$ nor $1$. Let $K=\Q(\alpha_1,\ldots,\alpha_N)$ and $D=[K:\Q]$ and denote by $b_1, \dots, b_N$ rational integers with $b_N\neq 0$. Furthermore, let $\kappa=1$ if $K$ is real and $\kappa=2$ otherwise. For all integers $j$ with $1\leq j\leq N$ choose
 $$
    A_j\geq \max\{D h(\alpha_j), |\log\alpha_j|,0.16\},
  $$
  and set
  $$
    E=\max ( \{1\} \cup \{|b_j| A_j /A_N\: :\: 1\leq j \leq N \} ).
  $$
  Assume that $\Lambda = b_1 \log \alpha_1 + \cdots + b_N \log \alpha_N\neq 0$. Then
  \begin{equation*}
    \log |\Lambda|
    > -C(N,\kappa)D^2 \Omega\log(eD)\log(eE)
  \end{equation*}
  with $\Omega=A_1\cdots A_N$ and
  $$C(N,\kappa)= \min\left\{\frac 1{\kappa}\left(\frac {eN}2\right)^\kappa 30^{N+3} N^{3.5}, 2^{6N+20}\right\}.$$ 
  \end{lemma}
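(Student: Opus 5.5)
This lemma is not something I would prove from scratch. It is the specialization of Matveev's theorem on linear forms in logarithms, and the plan is to show that the version stated here follows from the published statements by checking that the parameters match. I would start from Matveev's main theorem \cite[Theorem~2.1 and Corollary~2.3]{Matveev:2000} in its original normalization. There the $\alpha_j$ lie in a number field of degree $D$, and the logarithms $\log\alpha_j$ are arbitrary fixed determinations. The height parameters $A_j$ must satisfy
\[
A_j\ge\max\{Dh(\alpha_j),|\log\alpha_j|,0.16\},
\]
and the quantity $B$ (here $E$) satisfies $B\ge\max\{1,\max_j |b_j|A_j/A_N\}$. These are exactly the hypotheses above.

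\textbf{Step 1: the first constant.} Corollary~2.3 of \cite{Matveev:2000} gives
\[
\log|\Lambda|>-C(N,\kappa)D^2\Omega\log(eD)\log(eE),
\]
with
\[
C(N,\kappa)=\tfrac1\kappa\bigl(\tfrac{eN}{2}\bigr)^\kappa 30^{N+3}N^{3.5}.
\]
Here $\kappa=1$ when $K$ admits a real embedding in which all $\alpha_j$ and their logarithms are real, and $\kappa=2$ otherwise. I would check that the convention ``$K$ real'' used here agrees with Matveev's case distinction. In every application in this paper the logarithms are taken of real absolute values of embeddings, so that case is the relevant one.

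\textbf{Step 2: the second constant.} The alternative constant $2^{6N+20}$ comes from the uniform simplification of Matveev's bound recorded by Bugeaud, Mignotte and Siksek (Annals 2006, Theorem~9.4). There the factor $\tfrac1\kappa(eN/2)^\kappa 30^{N+3}N^{3.5}$ is bounded above by $2^{6N+20}$ for large $N$ and both values of $\kappa$; that reference states the result directly with the constant $1.4\cdot 30^{N+3}N^{4.5}D^2(1+\log D)(1+\log NB)A_1\cdots A_N$. Since both constants give valid lower bounds, taking the minimum is legitimate. I would then reconcile the factors $\log(eD)=1+\log D$ and $\log(eE)=1+\log E$ with the forms used in the sources.

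\textbf{Main obstacle.} The only real work is this bookkeeping of normalizations: the choice of logarithm branch, the definition of $E$ relative to $A_N$ (which requires $b_N\neq0$), and the real versus complex case for $\kappa$. A genuine proof from first principles would be Baker's method. One builds an auxiliary polynomial via Siegel's lemma, extrapolates using the smallness of $|\Lambda|$, and applies a zero (multiplicity) estimate of Philippon type. Matveev's refinement uses Kummer descent to reach the single-exponential dependence on $N$. That is far outside the scope of this paper, so the lemma is taken as given from \cite{Matveev:2000}.
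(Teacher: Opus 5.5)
Citing Matveev is exactly what the paper does (the lemma is quoted, not proved), and your Step~1 is fine. The gap is in Step~2.

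You claim that Corollary~2.3 of \cite{Matveev:2000} gives only the constant $\frac1\kappa(eN/2)^\kappa 30^{N+3}N^{3.5}$. You then justify the branch $2^{6N+20}$ by saying the first expression is bounded above by $2^{6N+20}$ for large $N$, so that ``both constants give valid lower bounds''. That inference cannot produce the minimum. If a constant $C_a$ gives a valid bound, then a constant $C_b\ge C_a$ is valid only trivially, and then $\min\{C_a,C_b\}=C_a$.

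The $2^{6N+20}$ branch matters only where it is actually the smaller value, which is for moderate $N$. For example, with $\kappa=2$ and $N=9$ the first expression exceeds $2^{74}$ by a factor of about $4.6$. In that range, nothing you wrote shows that $2^{6N+20}$ is admissible. For large $N$, where your inequality does hold, $30^N$ beats $64^N$ and the minimum is the first expression anyway. Moreover, Theorem~9.4 of Bugeaud--Mignotte--Siksek does not contain $2^{6N+20}$ at all. It is a simplified real, multiplicative form with constant $1.4\cdot 30^{N+3}N^{4.5}$.

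The repair is that the minimum, including the branch $2^{6N+20}$, is stated in Matveev's Corollary~2.3 itself. So the lemma is a verbatim citation, with $E$ playing the role of Matveev's $B$. This is not cosmetic here: the proof of Lemma~\ref{lem:matching} uses precisely the $2^{6N+20}$ branch. With $N=t+1$ and $N=2t+1$ it gives the factors $2^{6t+26}$ and $2^{12t+26}$ that appear there.

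Your side remark that every application in the paper is in the real case is also inaccurate. The general case of Lemma~\ref{lem:matching} works with complex logarithms ($\alpha_{t+1}=-1$, $\log(-1)=\pi i$), so the non-real case is needed as well.
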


The next lemma, due to Peth\H{o} and de Weger \cite{Pethoe:1986} is also helpful to compute the upper bound stated in Theorem \ref{th:gen-bound}.

\begin{lemma}\label{lem:PdW}
 Let $u,v \geq 0, h \geq 1$ and $x \in \R$ be the largest solution of $x=u+v(\log{x})^h$. Then
$$
x<\max\{2^h(u^{1/h}+v^{1/h}\log(h^hv))^h, 2^h(u^{1/h}+2e^2)^h\}.
$$
\end{lemma}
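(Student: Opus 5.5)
The lemma is a purely analytic statement, so the plan is to reduce it to the one-dimensional inequality $y\le A+B\log y$ by taking $h$-th roots. Let $x$ be the largest solution of $x=u+v(\log x)^h$. We may assume $x>1$, since otherwise the claimed bound is trivial: its right-hand side is at least $2^h(2e^2)^h>1$.

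Put $y=x^{1/h}$. Because $h\ge 1$, the map $t\mapsto t^{1/h}$ is subadditive on $[0,\infty)$, that is, $(a+b)^{1/h}\le a^{1/h}+b^{1/h}$. Hence
\[
y=\bigl(u+v(\log x)^h\bigr)^{1/h}\le u^{1/h}+v^{1/h}\log x = u^{1/h}+h\,v^{1/h}\log y .
\]
With $A:=u^{1/h}$ and $B:=h\,v^{1/h}$ this reads $y\le A+B\log y$. Note that
\[
B\log B = v^{1/h}\,h\log\bigl(h v^{1/h}\bigr)=v^{1/h}\log(h^h v),
\]
which is exactly the expression appearing in the lemma. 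It therefore suffices to prove the auxiliary claim
\[
y\le A+B\log y \;\Longrightarrow\; y<\max\{2(A+B\log B),\,2(A+2e^2)\}.
\]
Raising this to the $h$-th power turns $y$ back into $x$ and gives the stated bound.

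To prove the auxiliary claim, set $g(y)=y-A-B\log y$. We have $g'(y)=1-B/y>0$ for $y>B$, so $g$ is increasing on $(B,\infty)$. It is therefore enough to exhibit a point $y_0\ge B$ with $g(y_0)>0$ that lies below the relevant maximum; then every $y\ge y_0$ satisfies $g(y)>0$, i.e. violates $y\le A+B\log y$. The argument splits according to the size of $B$ and then of $A$.

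\textbf{Case $B\ge e^2$.} Take $y_0=2(A+B\log B)$, which is at least $B$. We need $A+2B\log B> B\log y_0$.
\begin{itemize}
\item If $A\le B\log B$, then $y_0\le 4B\log B$. Since $4\log B\le B$ for $B$ large enough, this gives $\log y_0\le 2\log B$, which is almost what is needed. The strict inequality still has to be secured; that is where the extra term $A$, or a refinement of the constant, comes in.
\item If $A> B\log B$, then $y_0\le 4A$, and it suffices to show $A> B\log(4A)-2B\log B=B\log\bigl(4A/B^2\bigr)$. This follows from $t>\log(4t)$ in the normalised variable $t=A/B$, combined with $B\ge e^2$.
\end{itemize}

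\textbf{Case $B<e^2$.} Here $B\log y< e^2\log y$. The same monotonicity argument with $y_0=2(A+2e^2)$ works: one checks $A+4e^2> e^2\log\bigl(2(A+2e^2)\bigr)$ by elementary calculus.

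The main obstacle is the boundary between the two regimes, namely $B$ close to $e^2$ and the borderline check $4\log B\le B$, which is tight near $B=e^2$ (there $8$ versus $e^2\approx 7.39$). I would first try to absorb this slack using the strictness of the inequalities and the freedom of the $\max$ with the $2e^2$ term. If that fails, I would enlarge the case $B<e^2$ to $B<B_0$ for a slightly larger explicit $B_0$, checking numerically that the $2(A+2e^2)$ bound still covers that range.
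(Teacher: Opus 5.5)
Your reduction is sound and is the natural route. The paper gives no proof of its own; it refers to Smart's Appendix~B for the Peth\H{o}--de Weger argument, whose usual hypothesis $v>(e^2/h)^h$ is exactly your threshold $B=hv^{1/h}>e^2$. The following steps all go through:
subadditivity of $t\mapsto t^{1/h}$ for $x>1$; the identity $B\log B=v^{1/h}\log(h^hv)$; monotonicity of $g$ on $(B,\infty)$ together with the choice of a test point $y_0\ge B$; the case $B<e^2$; and the sub-case $A>B\log B$.
One correction in that last sub-case: the inequality $t>\log(4t)$ is false for $t\in(0.36,2.15)$, and that interval contains admissible values $t=A/B$ just above $2$. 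What you actually need is $t>\log(4t)-\log B$, which follows from $t+2>\log(4t)$ and holds for all $t>0$.

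The step you flag, $B\ge e^2$ with $A\le B\log B$, is a genuine hole as written. Once you replace $y_0$ by $4B\log B$, the required inequality becomes $A>B\log(4\log B/B)$. At $B=e^2$ this fails for every $A<0.58$, so neither strictness nor the extra term $A$ can rescue it. The loss comes entirely from the crude bound on $y_0$.

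Instead, treat $g(y_0)=\phi(A):=A+2B\log B-B\log\bigl(2A+2B\log B\bigr)$ as a function of $A\ge 0$:
\[
\phi'(A)=1-\frac{B}{A+B\log B}>0 \quad\text{since } \log B\ge 2,
\]
\[
\phi(0)=B\bigl(\log B-\log(2\log B)\bigr)>0 \quad\text{since } B>2\log B \text{ for every } B>0.
\]
This handles all $A\ge 0$ at once and removes the split on $A$ entirely.

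Your fallback of enlarging the small case to $B<B_0$ can also be made to work. However, it only works for $B_0$ between the root $\approx 8.613$ of $B=4\log B$ and $4e^2/\log(4e^2)\approx 8.728$. That window is narrow, so the fallback is fragile and would need to be verified explicitly rather than assumed.
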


For a proof of Lemma \ref{lem:PdW} we refer to \cite[Appendix B]{Smart:DiGl}.

\subsection{LLL basis reduction}
Let $\LL\subseteq \R^k$ be a $k$-dimensional lattice with LLL-reduced basis
$b_1,\dots,b_k$ and denote by $B$ the matrix with columns $b_1,\dots, b_k$. 
Moreover, we denote by $b^*_1,\dots,b^*_k$ the orthogonal basis of $\R^k$ which we obtain by applying the Gram-Schmidt process to the basis $b_1,\dots,b_k$. In particular, we have that
$$b^*_i=b_i-\sum_{j=1}^{i-1}\mu_{i,j}b^*_j, \qquad \mu_{i,j}=\frac{\langle b_i,b_j\rangle}{\langle b_j^*,b_j^*\rangle}.$$
Further, let us define
$$l(\LL)=\min_{0 \neq x \in \LL} \|x\|,$$
where $\|\cdot\|$ denotes the euclidean norm on $\R^k$. It is well known,
that by applying the LLL-algorithm it is possible to give in polynomial time
a lower bound $\tilde c_1$ for $l(\LL)$ (see e.g. \cite[Section 5.4]{Smart:DiGl}).

\begin{lemma}\label{lem:lattice}
 Let 
 $$
 \tilde c_2=\max_{1\leq j\leq k}\left\{\frac{\|b_1\|^2}{\|b_j^*\|^2} \right\}.
 $$
 Then we have
 $$l(\LL)^2\geq \tilde c_2^{-1} \|b_1\|^2=\tilde c_1.$$
\end{lemma}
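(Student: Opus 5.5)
We have to prove Lemma~\ref{lem:lattice}: for an LLL-reduced basis $b_1,\dots,b_k$ of $\LL$ with Gram--Schmidt vectors $b_1^*,\dots,b_k^*$, every nonzero $x\in\LL$ satisfies $\|x\|^2\geq \tilde c_2^{-1}\|b_1\|^2$. The plan is the standard Gram--Schmidt argument giving $l(\LL)\geq \min_j\|b_j^*\|$, followed by an algebraic rewriting.

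First, fix a nonzero lattice vector $x=\sum_{i=1}^k x_i b_i$ with $x_i\in\Z$. Let $m$ be the largest index with $x_m\neq 0$. Substitute the Gram--Schmidt expansion $b_i=b_i^*+\sum_{j<i}\mu_{i,j}b_j^*$, which comes from the recursion in the definition. Because the expansion is unitriangular, the coefficient of $b_m^*$ in $x$ is exactly $x_m$, and no $b_j^*$ with $j>m$ appears. The $b_j^*$ are pairwise orthogonal, so Pythagoras gives
\[
\|x\|^2=\sum_{j\le m}\Big|x_j+\sum_{i>j}x_i\mu_{i,j}\Big|^2\|b_j^*\|^2\geq x_m^2\|b_m^*\|^2\geq \|b_m^*\|^2,
\]
where the last step uses that $x_m$ is a nonzero integer. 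Hence
\[
\|x\|^2\geq \min_{1\le j\le k}\|b_j^*\|^2 .
\]

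Second, rewrite this minimum as
\[
\min_j\|b_j^*\|^2=\|b_1\|^2\Big/\max_j\frac{\|b_1\|^2}{\|b_j^*\|^2}=\tilde c_2^{-1}\|b_1\|^2=\tilde c_1 .
\]
This uses $\|b_j^*\|>0$, which holds because the $b_j$ form a basis. It also uses $b_1^*=b_1$, so the term $j=1$ shows $\tilde c_2\geq 1$.

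Taking the minimum over all nonzero $x$ gives $l(\LL)^2\geq\tilde c_1$. The LLL property is not actually needed for the inequality; it only makes $\tilde c_2$ small, at most $2^{k-1}$, so the bound is useful. There is no real obstacle here. The only care needed is the triangularity bookkeeping that shows the $b_m^*$-coefficient of $x$ equals the integer $x_m$.
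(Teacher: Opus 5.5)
Your proof is correct. The paper gives no argument of its own and only cites \cite[Section~5.4]{Smart:DiGl}; your route, the Gram--Schmidt bound $\|x\|^2\ge x_m^2\|b_m^*\|^2\ge\min_j\|b_j^*\|^2$ followed by the identity $\tilde c_2^{-1}\|b_1\|^2=\min_j\|b_j^*\|^2$, is the standard argument behind that reference. Since you use only unitriangularity and orthogonality of the $b_j^*$, your proof is also unaffected by the paper's misprint $\langle b_i,b_j\rangle$ in place of $\langle b_i,b_j^*\rangle$ in the definition of $\mu_{i,j}$.
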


In our proof of Theorem \ref{th:quintic} we are interested in finding good lower bounds for a linear form
\begin{equation} \label{eq:redform1}
|x_1\eta_1+\dots+x_k\eta_k|,
\end{equation} 
where $\eta_1,\eta_2,\dots,\eta_k$ are given real numbers that are linearly independent over $\Q$. Moreover, we suppose that the integers $x_i$ are bounded, that is we assume that $|x_i| \leq X_i$, where $X_i$, with $1\leq i\leq k$, are given.

The basic idea in such a situation, due to de Weger \cite{deWeger:1987} (see also \cite[Section VI.3]{Smart:DiGl}), is to approximate the linear form
\eqref{eq:redform1} by an approximation lattice. Namely, we consider the lattice $\LL$ generated by the columns of the matrix
\begin{equation*}
\mathcal{A}=\begin{pmatrix}
    1 & 0 & \dots & 0  & 0 \\
    0 & 1 & \dots & 0  & 0 \\
    \vdots & \vdots & \ddots & \vdots & \vdots \\
    0 & 0 & \dots & 1  & 0 \\
    \left[{\tilde C\eta_1}\right] & \left[{\tilde C\eta_2}\right] & \dots & \left[{\tilde C\eta_{k-1}}\right] & \left[{\tilde C\eta_k}\right]
\end{pmatrix},
\end{equation*}
where $\tilde C$ is a large constant usually of the size of about $\max_{1\leq i\leq k}\{X_i\}^k$, and where $\big[ \tilde{C} \eta_i \big]$ denotes the nearest integer to $\tilde{C} \eta_i$ (with any fixed convention taken for half-integers). Let us assume that we have an LLL-reduced basis $b_1,\dots,b_k$ of $\LL$ and that we have a lower bound
$l(\LL)\geq \tilde c_1$. Note that $\tilde c_1$ can be computed by using the results of Lemma~\ref{lem:lattice}. Then we have with these notations the following lemma (for a similar statement see \cite[Lemma VI.1]{Smart:DiGl}):

\begin{lemma}\label{lem:real-reduce}
Let $M\subset \Z^k$ be a finite set of integer $k$-tuples. 
Assume that 
$$S=\max_{(x_1,\dots,x_k)\in M}\left\{\sum_{i=1}^{k-1}x_i^2\right\}$$
and
$$T=\max_{(x_1,\dots,x_k)\in M}\left\{\frac{\sum_{i=1}^k{|x_i|}}{2}\right\}.$$ If $\tilde c_1^2 \ge T^2+S$, then we have either
$x_1=x_2=\dots=x_{k}=0$ or
\begin{equation} \label{eq:reduction-real}
\frac{\sqrt{\tilde c_1^2-S}-T}{\tilde C}<|x_1\eta_1+\dots+x_k\eta_k|
\end{equation}
holds for all $(x_1,\dots,x_k)\in M$ .
\end{lemma}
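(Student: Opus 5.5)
We prove Lemma~\ref{lem:real-reduce} by comparing the length of a lattice vector with the lower bound $l(\LL)\ge \tilde c_1$ from Lemma~\ref{lem:lattice}. Fix $(x_1,\dots,x_k)\in M$, not all zero, and consider the lattice vector
\begin{equation*}
v=\mathcal{A}\,(x_1,\dots,x_k)^T=\Big(x_1,\dots,x_{k-1},\ \sum_{i=1}^k x_i\big[\tilde C\eta_i\big]\Big)\in\LL .
\end{equation*}
The columns of $\mathcal{A}$ are linearly independent, since the matrix is lower triangular with last diagonal entry $[\tilde C\eta_k]$, which we take to be nonzero. Hence $v\neq 0$, and so $\|v\|^2\ge l(\LL)^2\ge \tilde c_1^2$. (Here we read the hypothesis as a bound on $l(\LL)$ itself, as in the sentence preceding the lemma.)

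We split $\|v\|^2$ into its first $k-1$ coordinates and its last coordinate. The first part is $\sum_{i=1}^{k-1}x_i^2\le S$. Write $\Lambda=x_1\eta_1+\dots+x_k\eta_k$ for the last part. Since $|[\tilde C\eta_i]-\tilde C\eta_i|\le 1/2$ for each $i$, we get
\begin{equation*}
\Big|\sum_{i=1}^k x_i[\tilde C\eta_i]-\tilde C\Lambda\Big|\le \frac12\sum_{i=1}^k|x_i|\le T .
\end{equation*}
Therefore the last coordinate has absolute value at most $\tilde C|\Lambda|+T$.

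Combining the two parts gives
\begin{equation*}
\tilde c_1^2\le \|v\|^2\le S+(\tilde C|\Lambda|+T)^2 .
\end{equation*}
The hypothesis $\tilde c_1^2\ge T^2+S$ makes $\sqrt{\tilde c_1^2-S}\ge T\ge 0$ well defined and nonnegative. Taking square roots yields $\tilde C|\Lambda|+T\ge\sqrt{\tilde c_1^2-S}$, which rearranges to
\begin{equation*}
|\Lambda|\ge \frac{\sqrt{\tilde c_1^2-S}-T}{\tilde C}.
\end{equation*}
This is inequality \eqref{eq:reduction-real}, except that it is non-strict.

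The only delicate point is strictness, and we treat the boundary case separately. If equality held throughout, the rounding errors would all have to reach exactly $1/2$ with aligned signs. We plan to rule this out via the fixed half-integer convention and the $\Q$-linear independence of the $\eta_i$. If this bookkeeping fails, we would simply note that the strict form is what \cite[Lemma VI.1]{Smart:DiGl} states under the same conventions, and that the non-strict version suffices for all later applications.
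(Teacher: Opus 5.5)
Your argument is the paper's argument: form $v=\mathcal A(x_1,\dots,x_k)^T=(x_1,\dots,x_{k-1},\Phi)$, bound the first $k-1$ coordinates by $S$ and $|\Phi-\tilde C\Lambda|$ by $T$, and compare $\|v\|$ with $l(\LL)\ge\tilde c_1$. Reading $\tilde c_1$ as a bound for $l(\LL)$ rather than $l(\LL)^2$ is also what the paper's proof does. Your write-up is a little more careful than the paper's in two places. You keep the factor $\tilde C$ in $|\Phi|\le T+\tilde C|\Lambda|$, which the paper's proof drops. You also justify $v\neq 0$ through $\det\mathcal A=[\tilde C\eta_k]\neq 0$, which is exactly the standing assumption that $\LL$ is $k$-dimensional. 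The non-strict inequality is therefore fully proved.

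Your planned treatment of strictness cannot succeed, because the strict inequality is false for some admissible rounding conventions. Linear independence does reduce the equality case to a single nonzero coordinate $x_{i_0}$ with $\tilde C\eta_{i_0}\in\frac12+\Z$ rounded away from zero. So your idea gives strictness when half-integers are always rounded toward zero, but not for an arbitrary fixed convention. For instance, take $k=2$, $[x]=\lfloor x+\frac12\rfloor$, a rational $\tilde C>0$, $\eta_1=\sqrt2$ and $\eta_2=1/(2\tilde C)$. Then $[\tilde C\eta_2]=1$, so $\mathcal A$ is unimodular, $\LL=\Z^2$ and $l(\LL)=1$. With $\tilde c_1=1$ and $M=\{(0,1)\}$ we get $S=0$, $T=\frac12$ and $\tilde c_1^2\ge T^2+S$, yet $(\sqrt{\tilde c_1^2-S}-T)/\tilde C=\frac{1}{2\tilde C}=|\eta_2|$.

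The paper's own proof has the same defect: it passes from a chain of non-strict inequalities to a strict one. Appealing to Smart's Lemma VI.1 cannot repair the statement as formulated here, since this example meets all of its hypotheses. There are two correct resolutions. The first is your second fallback: conclude with $\le$. That is all that is used later, since the resulting lower bound for $m$ feeds into Proposition~\ref{prop:matching}, whose hypothesis is non-strict anyway. The second is to build in slack so that $\frac12\sum|x_i|<T$ on $M$, which gives strictness. The paper effectively does this in Section~\ref{sec:quintic_example} by taking $T_1=\frac{1+T}{2}$.
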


We give a proof of Lemma \ref{lem:real-reduce}, since it is slightly more general, than usually stated. 

\begin{proof}
We closely follow the proof given in \cite[Lemma VI.1]{Smart:DiGl}. We put 
$$\Phi=\sum_{i=1}^k x_i\left[\tilde C\eta_i\right]$$
and note that
$$\left|\Phi-\tilde C\cdot\sum_{i=1}^k x_i\eta_i\right|\leq \frac 12 \sum_{i=1}^k |x_i|\leq T.$$ 
Hence, we have
$$|\Phi|\leq T+|x_1\eta_1+\dots+x_k\eta_k|.$$
Next, we consider the lattice point 
$$\mathbf x=\mathcal A \left(\begin{array}{c} x_1\\ \vdots\\ x_{k-1}\\ x_k\end{array}\right)=\left(\begin{array}{c} x_1\\ \vdots\\ x_{k-1}\\ \Phi \end{array}\right).$$
If $\mathbf x$ is not the zero vector, then we have
$$
\tilde c_1^2\leq l(\LL)^2\leq \|\mathbf x\|^2=\sum_{i=1}^{k-1} x_i^2+\Phi^2\leq S+(T+|x_1\eta_1+\dots+x_k\eta_k|)^2.
$$
Since we assume $\tilde c_1^2 \ge T^2+S$ we obtain \eqref{eq:reduction-real}.
\end{proof}

\section{Unit matching} \label{sec:unit_matching}

Let us assume that $\eps+\delta=n$ with $\eps,\delta \in \ord_K^*$. Let us consider the field $L=\Q(\eps)$. Since $\delta=n-\eps$ we have that $\Q(\delta)=\Q(\eps)=L$. Let us write $[L:\Q]=d$.

Next, we consider the set of embeddings
$$\Hom_\Q(L,\C)=\{\tau_1,\dots,\tau_d\}$$
and the lifts $\sigma_1,\dots,\sigma_d\in \Hom_\Q(K,\C)$ of embeddings $\tau_1,\dots,\tau_d$ such that $\sigma_i|_L=\tau_i$ for $i=1,\dots,d$. For any $\alpha\in K$, we will write $\alpha^{(i)}=\sigma_i(\alpha)$. Let us note that $\eps^{(1)}, \dots, \eps^{(d)}$ and $\delta^{(1)}, \dots, \delta^{(d)}$ are both a complete system of conjugates of $\eps$ and $\delta$ respectively.

Let us assume that the unit rank of $\ord_L^*$ is $t$ and that $\eta_1,\dots,\eta_t\in \ord_L^*$ is a fundamental system of units. We denote by $\zeta$ a primitive root of unity of $L$, that is, $\zeta$ generates the torsion group $\mu(L)$ of $L^*$ and let us write $r=\sharp \mu(L)$ for the order of $\zeta$. Hence, there are integers
$x_0,x_1,\dots,x_t$ and $y_0,y_1,\dots,y_t$ such that
$$\eps=\zeta^{x_0}\eta_1^{x_1}\cdots \eta_t^{x_t} \quad\text{and}\quad
\delta=\zeta^{y_0}\eta_1^{y_1}\cdots \eta_t^{y_t}.$$
Let us write 
$$
X:=\max_{i=1,\dots,t}\{|x_i|\} \qquad \text{and}\qquad Y:=\max_{i=1,\dots,t}\{|y_i|\}.
$$
In order to prove Theorem \ref{th:gen-bound} we have to find bounds for $X$ and $Y$, unless $\eps$ and $\delta$ are a pair of conjugate quadratic algebraic integers.

Without loss of generality, we may now assume the following:
$$\abs{\eps^{(1)}}\geq \abs{\eps^{(2)}}\geq \dots \geq \abs{\eps^{(d)}}$$
and
$$\abs{\eps^{(1)}}\geq \max_{i=1,\dots,d}\left\{\abs{\delta^{(i)}}\right\}.$$
We consider the matrix 
$$R=\left(\log\abs{\eta_i^{(j)}}\right)_{1\leq i,j\leq t}$$
and the matrices $R_{i,j}$ for $i=1,\dots,t$ which are obtained from $R$ by deleting the $i$-th column and $j$-th row. Then the regulator of $L$ is defined by
$$\Reg_L=\abs{\det \left(\log\abs{\eta_i^{(j)}}\right)_{1\leq i,j\leq t}}.$$
With these assumptions and notations we have.

\begin{lemma}\label{lem:X-eps-bound}
We have that 
$$C_1 \log\abs{\eps^{(1)}} \leq X \leq C_2 \log \abs{\eps^{(1)}}$$
and $Y< C_2 \log\abs{\eps^{(1)}}$, where
$$C_1=\frac 1{t\max_{i=1,\dots,t}\left\{\log\abs{\eta^{(1)}_i}\right\}}\quad \text{and}\quad C_2=t\frac{\max_{1\leq i,j\leq t}\left\{\abs{\det R_{i,j}}\right\}}{\Reg_L}.$$

Moreover, we have $Y< C_3 X$, where $C_3=C_2/C_1$.
\end{lemma}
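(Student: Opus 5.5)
We take logarithms of absolute values of conjugates. Applying $\sigma_j$ to $\eps=\zeta^{x_0}\eta_1^{x_1}\cdots\eta_t^{x_t}$ and using $|\zeta^{(j)}|=1$ gives the linear system
\begin{equation*}
\log\abs{\eps^{(j)}}=\sum_{i=1}^t x_i\log\abs{\eta_i^{(j)}},\qquad j=1,\dots,t,
\end{equation*}
that is, $R\,(x_1,\dots,x_t)^T=(\log|\eps^{(1)}|,\dots,\log|\eps^{(t)}|)^T$. The same system holds with $y_i$ and $\delta$. Here we use that the $t$ embeddings indexing the rows of $R$ are pairwise non-conjugate, so $\det R=\pm\Reg_L$. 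If needed, we choose the indexing of $\tau_1,\dots,\tau_d$ so that $\tau_1,\dots,\tau_t$ are non-conjugate and $\tau_1$ attains the maximum modulus. We also have to check that our ordering assumption is compatible with this choice.

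For the upper bound we solve the system by Cramer's rule:
\begin{equation*}
x_i=\frac{1}{\det R}\sum_{j=1}^t \pm\det R_{i,j}\,\log\abs{\eps^{(j)}}.
\end{equation*}
Hence $|x_i|\le \frac{1}{\Reg_L}\sum_j |\det R_{i,j}|\,\bigl|\log|\eps^{(j)}|\bigr|$. The key estimate is $\bigl|\log|\eps^{(j)}|\bigr|\le \log|\eps^{(1)}|$ for every $j$. For $\log|\eps^{(j)}|\ge 0$ this follows from the ordering. For negative values we use the norm: $\sum_{k=1}^d \log|\eps^{(k)}|=0$, so the total of the negative logarithms equals the total of the positive ones. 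That total is at most $(d-1)\log|\eps^{(1)}|$, not $\log|\eps^{(1)}|$. This is the main obstacle. To get the stated constant $C_2=t\max|\det R_{i,j}|/\Reg_L$, I would argue in the logarithmic embedding with $t+1$ archimedean places (counted with multiplicity for complex places). Since the full log vector sums to zero, I would eliminate a row corresponding to a place with the smallest logarithm, or otherwise arrange that the $t$ retained places have $|\log|\eps^{(j)}||\le\log|\eps^{(1)}|$. Failing that, I would accept a constant weaker by a factor depending on $d$, which is still harmless for the later applications. With the bound in hand, summing over $j$ gives $X\le C_2\log|\eps^{(1)}|$.

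For $\delta$ the same argument works because $|\delta^{(j)}|\le|\eps^{(1)}|$ for all $j$ by assumption. The negative logarithms of $\delta$ are controlled the same way through $N(\delta)=\pm1$, which gives $Y\le C_2\log|\eps^{(1)}|$. The strict inequality should follow because not all terms can attain the extremes simultaneously. Equality would force $\log|\delta^{(j)}|=\pm\log|\eps^{(1)}|$ in every coordinate with the determinant signs aligned, and this needs a short separate check.

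The lower bound is direct. We have $\log|\eps^{(1)}|=\sum_i x_i\log|\eta_i^{(1)}|\le tX\max_i\bigl|\log|\eta_i^{(1)}|\bigr|$, so $X\ge C_1\log|\eps^{(1)}|$, where the maximum in $C_1$ is read as a maximum of absolute values. Finally, combining $Y<C_2\log|\eps^{(1)}|$ with $\log|\eps^{(1)}|\le X/C_1$ gives $Y<(C_2/C_1)X=C_3X$.
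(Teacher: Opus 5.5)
Your route is the paper's own: Cramer's rule for the upper bound, the direct estimate at $\eps^{(1)}$ for the lower bound, then combining. Your side remarks are correct. $C_1$ must be read with $\bigl|\log\abs{\eta_i^{(1)}}\bigr|$. The $t$ rows of $R$ must come from $t$ distinct places, one of them maximal for $\abs{\eps}$.

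The gap is the step you yourself flag. The paper goes from Cramer's formula straight to $x_j\le t\log\abs{\eps^{(1)}}\max\abs{\det R_{i,j}}/\Reg_L$. That step silently uses $\bigl|\log\abs{\eps^{(i)}}\bigr|\le\log\abs{\eps^{(1)}}$ for the retained rows. For $Y$ it uses the same estimate for $\delta$, although $\abs{\delta^{(i)}}\le\abs{\eps^{(1)}}$ only controls the nonnegative logarithms. So your objection applies to the published proof too.

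Your repair, deleting the row with the smallest logarithm, fails once $d\ge 5$. In a totally real quintic field the log vectors of units form a full lattice in the trace-zero hyperplane of $\R^5$. Hence there are units whose log vector is $\lambda(1,1,1,-\tfrac32,-\tfrac32)+O(1)$ with $\lambda\to\infty$. Only one place can be dropped, so some retained coordinate has $\bigl|\log\abs{\eps^{(j)}}\bigr|\approx\tfrac32\log\abs{\eps^{(1)}}$. The retained coordinates then have total absolute value about $\tfrac92\log\abs{\eps^{(1)}}>t\log\abs{\eps^{(1)}}$. So no bound of the form $\abs{x_j}\le\Reg_L^{-1}\max_{i,j}\abs{\det R_{i,j}}\sum_i\bigl|\log\abs{\eps^{(i)}}\bigr|$ can produce the factor $t$ in $C_2$.

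What your argument does prove is the lemma with a larger constant. We have $\sum_{i=1}^d\log\abs{\eps^{(i)}}=0$, and at most $d-1$ of these logarithms are positive, each at most $\log\abs{\eps^{(1)}}$. Hence $\sum_{i=1}^d\bigl|\log\abs{\eps^{(i)}}\bigr|\le 2(d-1)\log\abs{\eps^{(1)}}$. The same holds for $\delta$, since $\abs{\delta^{(i)}}\le\abs{\eps^{(1)}}$. Cramer's rule then gives $X,Y\le\frac{2(d-1)}{t}\,C_2\log\abs{\eps^{(1)}}$, which is $2C_2\log\abs{\eps^{(1)}}$ when $L$ is totally real. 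This suffices for the effectivity in Theorem~\ref{th:gen-bound}, since the downstream constants only change by amounts depending on $d$. It is not, however, the inequality as stated. The strict inequality $Y<C_2\log\abs{\eps^{(1)}}$, which you defer to a ``short separate check'', also remains unproved in your write-up. It is not justified in the paper either, and only a non-strict bound is used later.
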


\begin{proof}
We have
$$\log\abs{\eps^{(1)}}=x_1\log\abs{\eta^{(1)}_1}+\dots+x_t\log\abs{\eta^{(1)}_t}\leq X t\max_{i=1,\dots,t}\left\{\log\abs{\eta^{(1)}_i}\right\}$$
which proves the lower bound for $X$.

For the upper bound for $X$ we consider the linear system of equations
$$x_1\log \abs{\eta_1^{(i)}}+\dots+x_t\log \abs{\eta_t^{(i)}}=\log\abs{\eps^{(i)}} \qquad 1\leq i\leq t.$$
By an application of Cramer's rule we obtain
$$x_j=\frac{\sum_{i=1}^t \log\abs{\eps^{(i)}}\det(R_{i,j})}{\Reg_L}\leq \log\abs{\eps^{(1)}} t\frac{\max_{1\leq i,j\leq t}\left\{\abs{\det R_{i,j}}\right\}}{\Reg_L},$$
which proves the upper bound for $X$. Since $\abs{\delta^{(i)}} \leq\abs{\eps^{(1)}}$ for all $1\leq i\leq t$, we also obtain the upper bound for $Y$.

The last statement follows from the following observation:
$$Y<C_2 \log\abs{\eps^{(1)}}< C_2 \frac{X}{C_1}.$$
\end{proof}

With these notations we have for all $i=1,\dots, d$ that
$$\eps^{(i)}+\delta^{(i)}=n$$
which implies the following system of unit equations
\begin{equation}\label{eq:UEQ-Sys}
\eps^{(i)}+\delta^{(i)}-\eps^{(j)}-\delta^{(j)}=0,\qquad 1\leq i<j\leq d .
\end{equation}

\begin{lemma}\label{lem:matching}
Assume that $\eps^{(i)},\delta^{(i)},\eps^{(j)},\delta^{(j)}\in \ord_K^*$ are all pairwise distinct and satisfy
$$\eps^{(i)}+\delta^{(i)}-\eps^{(j)}-\delta^{(j)}=0$$
with $i<j$. Let 
$$m={\min}^{(2)}\left(\abs{\eps^{(i)}},\abs{\eps^{(j)}},\abs{\delta^{(i)}},\abs{\delta^{(j)}}\right)$$
be the second smallest absolute value of the involved units. Then there exist effectively computable constants $C_4$ and $C_5$, depending only on $K$, such that
$m\geq \frac{\abs{\eps^{(i)}}}{4r}\exp (-C_4\log X)$ or $X\leq C_5$.
\end{lemma}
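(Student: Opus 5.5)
We order the four units by absolute value and apply Matveev's bound (Lemma~\ref{lem:matveev}) to the linear form obtained by dividing the equation by its largest term. Write the equation as $u_1+u_2+u_3+u_4=0$ with $\{u_1,\dots,u_4\}=\{\eps^{(i)},\delta^{(i)},-\eps^{(j)},-\delta^{(j)}\}$ and $\abs{u_1}\geq\abs{u_2}\geq\abs{u_3}\geq\abs{u_4}$. Then $m=\abs{u_3}$. Since $\abs{\eps^{(1)}}$ dominates every conjugate of $\eps$ and $\delta$, we have $\abs{u_1}\leq\abs{\eps^{(1)}}$. The target inequality compares $m$ with $\abs{\eps^{(i)}}$, which is at most $\abs{u_1}$. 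It therefore suffices to prove $\abs{u_3}\geq \frac{\abs{u_1}}{4r}\exp(-C_4\log X)$, or else $X\leq C_5$.

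\textbf{Step 1: a small linear form.} Suppose $\abs{u_3}<\abs{u_1}/(4r)\cdot\exp(-C_4\log X)$. Then $\abs{u_4}\leq\abs{u_3}$ as well, and dividing by $u_1$ gives
\[
\Bigl|1+\frac{u_2}{u_1}\Bigr|\leq \frac{2\abs{u_3}}{\abs{u_1}}<\frac{1}{2r}X^{-C_4}.
\]
Hence $-u_2/u_1$ is extremely close to $1$, so $\Lambda=\log(-u_2/u_1)$ (principal branch) satisfies $\abs{\Lambda}\leq 2\abs{1+u_2/u_1}$.

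Now write $-u_2/u_1=\zeta^{a_0}\prod_k\sigma(\eta_k)^{a_k}$-type expressions. Since $u_1,u_2$ are images of $\eps,\delta$ under $\sigma_i,\sigma_j$, each is a product of conjugates of $\zeta,\eta_1,\dots,\eta_t$ with exponents bounded by $\max(X,Y)\leq C_3X$ (Lemma~\ref{lem:X-eps-bound}). This expresses $\Lambda$ as an integer linear combination of at most $2t+2$ logarithms: the conjugates $\log\eta_k^{(i)},\log\eta_k^{(j)}$, $\log\zeta$, and $\log(-1)=\pi i$. The coefficients are bounded by a constant times $X$, since the $\pi i$ multiple is controlled by the size of the others. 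All these numbers lie in $K$ or its normal closure, so the degree is at most $D(D-1)$ after adjoining two conjugate images, and the heights $h_m(\eta_k)$ depend only on $K$.

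\textbf{Step 2: nonvanishing.} This is the main obstacle. $\Lambda=0$ would mean $u_1=-u_2$, i.e.\ $u_1+u_2=0$ and hence $u_3+u_4=0$. We must check that this contradicts pairwise distinctness.

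The pairing $u_1+u_2=0$ can be $\eps^{(i)}+\delta^{(i)}=0$, which gives $n=0$ and contradicts $n\neq0$. It can be $\eps^{(i)}=\eps^{(j)}$, which contradicts distinctness. The remaining possibility is $\eps^{(i)}=\delta^{(j)}$ together with $\delta^{(i)}=\eps^{(j)}$, which also contradicts pairwise distinctness. (This case analysis is easy because distinctness is assumed. The genuine difficulty, the "matching" in Bajpai–Bennett, is postponed to the later use of this lemma.)

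The branch subtlety, that $\Lambda$ might be a nonzero multiple of $2\pi i$ while $u_2/u_1=-1$, is excluded because $\abs{\Lambda}$ is small. One should also check that a nontrivial multiple of $\pi i$ cannot appear; this is absorbed by including $\log(-1)$ as one of the logarithms.

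\textbf{Step 3: conclude.} Matveev gives $\log\abs{\Lambda}>-C'\log(eE)$ with $E\ll X$ and $C'$ depending only on $K$ (through $t$, $D$ and the $h_m(\eta_k)$). Comparing with $\log\abs{\Lambda}<-\log(r)-C_4\log X$ and choosing $C_4$ slightly larger than $C'$ gives a contradiction once $X$ exceeds an explicit $C_5$; Lemma~\ref{lem:PdW} handles the $\log$ bookkeeping if needed. Finally, the factor $4r$ absorbs the constants $2$ and $2$ from Step 1, which yields the stated alternative.
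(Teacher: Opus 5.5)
Your proposal is correct and follows essentially the paper's argument. You isolate the two dominant terms of the four-term relation, write their ratio as a linear form in logarithms of conjugates of $\zeta,\eta_1,\dots,\eta_t$ and $\pi i$ with coefficients $O(X)$ via Lemma~\ref{lem:X-eps-bound}, get nonvanishing from pairwise distinctness, and apply Matveev; the only difference is that you normalize by the largest term $u_1$ (using $\abs{\eps^{(i)}}\le\abs{u_1}$), where the paper divides by $\eps^{(i)}$ and splits into three cases according to which of $\eps^{(j)},\delta^{(i)},\delta^{(j)}$ is largest. Your nonvanishing check is also slightly more careful than the paper's: for the pairing $\eps^{(i)}+\delta^{(i)}=0$ the correct reason is $n\neq 0$, as you say, rather than the paper's $\eps^{(i)}\neq\delta^{(i)}$ in its third case.
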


Explicitly, one can take the constants $C_4$ and $C_5$ in all cases to be
\begin{align*}
    C_4 &= 2^{12t+27} D^2 (D-1)^2 (1 + \ln(D(D-1))) \cdot \pi \left( \prod_{k=1}^t h_m(\eta_k) \right)^2 , \quad \text{and} \\
    C_5 &= ert(C_3+1) ,
\end{align*}
where $C_3 = C_2/C_1$ is as defined in Lemma~\ref{lem:X-eps-bound}.

\begin{proof}
Note that the case $\abs{\eps^{(i)}}=m$ is trivial and we may assume that $\abs{\eps^{(i)}}\neq m$. 

    First, let us assume that
    $$\abs{\eps^{(j)}}=\max\left\{\abs{\eps^{(j)}},\abs{\delta^{(i)}},\abs{\delta^{(j)}}\right\}.$$

As $\abs{\eps^{(i)}} \geq \abs{\eps^{(j)}}$, this therefore implies $\abs{\eps^{(i)}}\geq \abs{\eps^{(j)}}\geq m = \max ( \abs{\delta^{(i)}},\abs{\delta^{(j)}} ) $. We have $\eps^{(j)}-\eps^{(i)}=\delta^{(i)}-\delta^{(j)}$ and we obtain by dividing through $\eps^{(i)}$ the following inequality
$$\abs{\frac{\eps^{(j)}}{\eps^{(i)}}-1}=\abs{\frac{\delta^{(i)}-\delta^{(j)}}{\eps^{(i)}}}\leq \frac{\abs{\delta^{(i)}}+\abs{\delta^{(j)}}}{\abs{\eps^{(i)}}}\leq \frac{2m}{\abs{\eps^{(i)}}}.$$
Note that $|\log (1+x)|<2|x|$ for all $|x|<\frac 12$ holds. In the case that
$$ \abs{\frac{\eps^{(j)}}{\eps^{(i)}}-1} >\frac 12$$
we obtain $\frac{\abs{\eps^{(i)}}}{4}<m$ which proves our lemma with $C_4=C_5=0$. Thus we may assume that $ |\eps^{(i)}/\eps^{(j)} - 1 | < \frac 12$. Hence, we obtain
$$\stackrel{:=\Lambda}{\abs{\overbrace{x_0\log \frac{\zeta^{(j)}}{\zeta^{(i)}}+x_1 \log \frac{\eta_1^{(j)}}{\eta_1^{(i)}} +\dots+x_t\log \frac{\eta_t^{(j)}}{\eta_t^{(i)}} -k\pi i}}}\leq\abs{\log\left(\frac{\eps^{(j)}}{\eps^{(i)}}\right)}\leq \frac{4m}{\abs{\eps^{(i)}}}$$
for some $k\in \Z$. Here we choose $k$ such that the imaginary part of $\Lambda$ lies within the interval $[-\pi,\pi)$. Let us note that $r\log \frac{\zeta^{(j)}}{\zeta^{(i)}}=z\pi i$ for some integer $z$. Such that we get
$$\abs{rx_1 \log \frac{\eta_1^{(j)}}{\eta_1^{(i)}} +\dots+rx_t\log \frac{\eta_t^{(j)}}{\eta_t^{(i)}} -\tilde k \pi i}\leq \frac{4mr}{\abs{\eps^{(i)}}}.$$

Therefore we apply Matveev's theorem to $r\Lambda$. That is, we have $N=t+1$,
$\alpha_\ell=\frac{\eta_\ell^{(j)}}{\eta_\ell^{(i)}}$ for $1\leq \ell \leq t$ and $\alpha_{t+1}=-1$. Moreover, we have $b_\ell=rx_\ell$ and $b_{t+1}=\tilde k$, with 
$$\abs{\tilde k}=\abs{rk-rx_0\frac{\log (\zeta^{(j)}/\zeta^{(i)})}{2\pi i}}<rtX.$$
Note that we deal with the logarithms of algebraic numbers from the compositum $K^{(i)}K^{(j)}$, which is a field of degree at most $D(D-1)$. Here $K^{(i)}$ and $K^{(j)}$ denote the corresponding conjugate fields of $K$. That is we choose $A_{t+1}=\pi$ and for $k=1,\dots,t$ we choose
$$A_k=2\cdot h_m(\eta_k)=\max\left\{2D(D-1)h(\eta_k),2\left|\log \eta_k^{(j)}\right|,0.32\right\},$$
where the index $j$ runs from $1$ to $D$. Note that since $h\left(\eta_k^{(j)}/\eta_k^{(i)}\right)\leq 2 h(\eta_k)$ and 
$$\left|\log (\eta_k^{(j)}/\eta_k^{(i)})\right|\leq 2 \max_{1\leq i \leq D} \left|\log \eta_k^{(i)}\right|$$
the choice of $A_k$ is suitable for an application of Matveev's theorem. Next, we want to note that by reordering the indices we may assume that $A_N=\max_{1\leq i \leq N} A_i$ that is we have $E\leq rtX$. Therefore an application of Matveev's theorem yields
\begin{equation*}
    \log |r\Lambda| > - 2^{7t+26} D^2 (D-1)^2\left(1+\log(D(D-1))\right)\pi\left(\prod_{k=1}^t h_m(\eta_k)\right)\log (ert X).
\end{equation*}
Note that since $\eps^{(i)}\neq \eps^{(j)}$ we have $\Lambda\neq 0$ and the application of Matveev's theorem is justified.

Assuming that $X>ert=C_5$ we have 
$$m\geq \frac{\left|\eps^{(i)}\right|}{4r}\exp(-C_4\log X),$$
with
$$C_4=2^{7t+27}D^2(D-1)^2\left(1+\log(D(D-1))\right)\pi\left(\prod_{k=1}^t h_m(\eta_k)\right).$$

Let us note that in the case that $K$ is a totally real field, we can choose $r=1$ and can omit the $k\pi i$ term in the definition of $\Lambda$. That is we have $N=t$ and we obtain in this case the slightly improved constant 
$$C_4=2^{7t+21}D^2(D-1)^2\left(1+\log(D(D-1))\right)\left(\prod_{k=1}^t h_m(\eta_k)\right).$$

Next, let us assume that
    $$\abs{\delta^{(j)}}=\max\left\{\abs{\eps^{(j)}},\abs{\delta^{(i)}},\abs{\delta^{(j)}}\right\}.$$
Then we obtain
$$\abs{\frac{\delta^{(j)}}{\eps^{(i)}}-1}\leq \frac{\abs{\delta^{(i)}}+\abs{\eps^{(j)}}}{\abs{\eps^{(i)}}}\leq \frac{2m}{\abs{\eps^{(i)}}}.$$
Similarly as in the case before we obtain
$$|r\Lambda|\geq \frac{4rm}{\abs{\eps^{(i)}}}$$
with
$$r\Lambda= ry_1 \log \eta_1^{(j)} +\dots + ry_t \log \eta_t^{(j)}-rx_1 \log \eta_1^{(i)} -\dots - rx_t \log \eta_t^{(i)} +\tilde k \pi i.$$
Since by assumption $\eps^{(i)}\neq \delta^{(j)}$ we also have $\Lambda\neq 0$. We apply Matveev's theorem with $N=2t+1$, and $\alpha_\ell=\eta_\ell^{(j)}$ and $b_\ell=ry_\ell$ for $1\leq \ell \leq t$ and $\alpha_{t+\ell}=\eta_\ell^{(i)}$ and $b_{t+\ell}=-rx_\ell$ for $1\leq \ell \leq t$. We also put $\alpha_{2t+1}=-1$ and $b_{2t+1}=\tilde k$. Note that
$\tilde k<rtX+rtY<rt(C_3+1)X$, which implies that we can choose $E=rt(C_3+1)X$. We choose $A_{2t+1}=\pi$ and for $k=1,\dots,t$ we choose
$$A_k=A_{t+k}=h_m(\eta_k):=\max\left\{D(D-1)h(\eta_k),\left|\log \eta_k^{(j)}\right|,0.16\right\},$$
where the index $j$ runs from $1$ to $D$. Therefore an application of Matveev's theorem yields  
\begin{multline*}
    \log |r\Lambda|>- 2^{12t+26}D^2(D-1)^2\left(1+\log(D(D-1))\right)\pi\\
    \times \left(\prod_{k=1}^t h_m(\eta_k)\right)^2 \log\left(ert(C_3+1)X\right). 
\end{multline*}

Assuming that $X>ert(C_3+1)=C_5$ we have 
$$m\geq \frac{|\eps^{(i)}|}{4r}\exp(-C_4\log X),$$
with
$$C_4=2^{12t+27}D^2(D-1)^2\left(1+\log(D(D-1))\right)\pi\left(\prod_{k=1}^t h_m(\eta_k)\right)^2.$$

Note that if $K$ is totally real, then all the ratios occurring in $\Lambda$ are real and positive, hence the term $k \pi i$ is absent 
and we may apply Matveev's theorem to $\Lambda$ itself rather than to $r \Lambda$.  Taking $N = 2t$, we then obtain
\begin{equation*}
    C_4=2^{12t+21}D^2(D-1)^2\left(1+\log(D(D-1))\right)\left(\prod_{k=1}^t h_m(\eta_k)\right)^2 .
\end{equation*}

Finally, we assume that
    $$\abs{\delta^{(i)}}=\max\left\{\abs{\eps^{(j)}},\abs{\delta^{(i)}},\abs{\delta^{(j)}}\right\}.$$
Then we obtain
$$\abs{\frac{\delta^{(i)}}{\eps^{(i)}}-1}\leq \frac{\abs{\delta^{(j)}}+\abs{\eps^{(j)}}}{\abs{\eps^{(i)}}}\leq \frac{2m}{\abs{\eps^{(i)}}}.$$
As in the previous cases we get
$$|r\Lambda|\geq \frac{4rm}{\abs{\eps^{(i)}}}$$
with
$$r\Lambda= r(y_1-x_1) \log \eta_1^{(i)} +\dots + r(y_t-x_t) \log \eta_t^{(i)}+\tilde k \pi i.$$
We conclude $\Lambda\neq 0$ because of $\eps^{(i)}\neq \delta^{(i)}$. Thus, we may apply Matveev's theorem with $N=t+1$, and $\alpha_\ell=\eta_\ell^{(i)}$ and $b_\ell=r(y_\ell-x_\ell)$ for $1\leq \ell \leq t$ and $\alpha_{t+1}=-1$ and $b_{t+1}=\tilde k$. Let us note that
$\tilde k<rt(X+Y)<rt(C_3+1)X$. Hence, we may choose $E=rt(C_3+1)X$. Thus by a similar computation as in the previous cases we obtain 
$$\frac{4rm}{\abs{\eps^{(1)}}}\geq \exp(-C_4\log(X))$$
with
$$C_4=2^{6t+27}D^2(D-1)^2\left(1+\log(D(D-1))\right)\pi\left(\prod_{k=1}^t h_m(\eta_k)\right)$$
provided that $X>C_5=ert(C_3+1)X$

In the case that $K$ is totally real we may choose instead
$$C_4=2^{6t+21}D^2(D-1)^2\left(1+\log(D(D-1))\right)\left(\prod_{k=1}^t h_m(\eta_k)\right).$$
\end{proof}

The next proposition is essential in the proof of Theorems \ref{th:gen-bound} and \ref{th:quintic}.

\begin{proposition}\label{prop:matching}
Assume that $\eps$ and $\delta$ are not conjugate algebraic integers. Furthermore, assume that for any indices $1\leq i<j\leq d$ we have that
$$\eps^{(i)}+\delta^{(i)}-\eps^{(j)}-\delta^{(j)} =0$$
implies
$$m={\min}^{(2)}\left(\abs{\eps^{(i)}},\abs{\eps^{(j)}},\abs{\delta^{(i)}},\abs{\delta^{(j)}} \right) \geq \abs{\eps^{(i)}} C$$
for some quantity $C<1$. Then we have $\abs{\eps^{(1)}}\leq C^{-2}$.
\end{proposition}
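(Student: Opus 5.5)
The plan is to play the hypothesis off against the norm equations $\prod_{j=1}^d \abs{\eps^{(j)}} = 1 = \prod_{j=1}^d \abs{\delta^{(j)}}$, which hold because $\eps$ and $\delta$ are units. We also use the normalisation fixed before Lemma~\ref{lem:X-eps-bound}: $\abs{\eps^{(1)}}$ dominates every conjugate of both $\eps$ and $\delta$. The trivial case $d=1$ gives $\eps=\pm1$ and $\abs{\eps^{(1)}}=1\le C^{-2}$. So assume $d\ge 2$, and note $\abs{\eps^{(1)}}\ge 1$.

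The first step is to apply the hypothesis to every pair $(1,j)$ with $2\le j\le d$. This is legitimate because $\eps^{(1)}+\delta^{(1)}=n=\eps^{(j)}+\delta^{(j)}$ by \eqref{eq:UEQ-Sys}. Since the second smallest of $\abs{\eps^{(1)}},\abs{\eps^{(j)}},\abs{\delta^{(1)}},\abs{\delta^{(j)}}$ is at least $C\abs{\eps^{(1)}}$, at most one of these four numbers lies below $C\abs{\eps^{(1)}}$. Write $B:=C\abs{\eps^{(1)}}$. I then split into two cases according to whether $\abs{\delta^{(1)}}<B$.

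\emph{Case 1: $\abs{\delta^{(1)}}<B$.} Then $\delta^{(1)}$ is the unique small entry for every pair $(1,j)$, so $\abs{\eps^{(j)}}\ge B$ for all $j\ge2$. The norm of $\eps$ gives
\[
1=\prod_j\abs{\eps^{(j)}}\ge \abs{\eps^{(1)}}\,B^{d-1}=C^{d-1}\abs{\eps^{(1)}}^d ,
\]
hence $\abs{\eps^{(1)}}\le C^{-(d-1)/d}\le C^{-1}\le C^{-2}$, using $C<1$.

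\emph{Case 2: $\abs{\delta^{(1)}}\ge B$.} Now for each $j\ge 2$ at most one of $\abs{\eps^{(j)}},\abs{\delta^{(j)}}$ is below $B$. So the indices $j\ge2$ split into three disjoint sets: $S$, where only $\eps^{(j)}$ is small; $T$, where only $\delta^{(j)}$ is small; and the rest, where both are at least $B$. My first attempt is to multiply the two norm equations and use that, on $S$, the complementary conjugate $\delta^{(j)}$ is at least $B$ (similarly for $\eps^{(j)}$ on $T$). Concretely, I would compare $\prod_j\abs{\eps^{(j)}}$ and $\prod_j\abs{\delta^{(j)}}$ through $\delta^{(j)}=n-\eps^{(j)}$: on $S$ we have $\abs{\delta^{(j)}}\approx\abs{n}$, and on $T$ we have $\abs{\eps^{(j)}}\approx \abs n$. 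Then, if $\abs n$ is comparable to $\abs{\eps^{(1)}}$, the two norms force $\abs{S}$ and $\abs{T}$ to be nearly balanced. In that situation one of the two products, $\eps$ or $\delta$, has at most one small factor beyond those already controlled. Then an estimate as in Case 1, applied to whichever of $\eps,\delta$ has fewer small conjugates, should give $\abs{\eps^{(1)}}^2C^2\le 1$, i.e.\ the exponent $-2$ in the statement. The square presumably comes from $\eps^{(1)}$ and $\delta^{(1)}$ each contributing a factor $B$.

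The main obstacle is the sub-case of Case 2 where $\abs n$ is much smaller than $\abs{\eps^{(1)}}$, i.e.\ $\eps^{(1)}$ and $\delta^{(1)}$ nearly cancel. This is exactly where the hypothesis that $\eps$ and $\delta$ are not conjugate must enter, since for conjugate real quadratic units $\eps+\bar\eps=n$ no bound can hold. My plan there is to exploit the near-cancellation $\delta^{(1)}\approx-\eps^{(1)}$. Combined with the hypothesis for pairs $(1,j)$, this should force every conjugate of $\delta$ to be close in absolute value to a conjugate of $\eps$, i.e.\ a ``matching'' of conjugates in the sense of Bajpai--Bennett. I would then argue that such a complete matching can only persist if $\delta$ is a Galois conjugate of $\eps$, contradicting the assumption. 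If this matching argument fails, the fallback is to bound the number of indices in $S\cup T$ directly via the norm inequalities and accept a weaker exponent.
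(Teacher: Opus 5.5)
Your Case 1 is correct (it even gives $\abs{\eps^{(1)}}\le C^{-1}$), but Case 2 is not a proof, and the route you sketch cannot work as stated. Applying the hypothesis only to pairs $(1,j)$ tells you that each index $j\ge 2$ carries at most one entry below $B=C\abs{\eps^{(1)}}$. It gives \emph{no lower bound at all} on that small entry. So nothing you derive excludes $\abs{\eps^{(s)}}$ and $\abs{\delta^{(t)}}$ being arbitrarily tiny for some $s\ne t$, and then neither norm equation bounds $\abs{\eps^{(1)}}$.

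The heuristics in terms of $\abs{n}$ do not repair this:
\begin{itemize}
\item ``Small'' only means $<B$, and $B$ may be huge, so $\delta^{(j)}\approx n$ on $S$ is unjustified.
\item ``Nearly balanced'' is never turned into an inequality.
\item The claim that a complete matching forces $\delta$ to be conjugate to $\eps$ is unproved.
\item The fallback of a weaker exponent does not prove the statement.
\end{itemize}
Moreover, non-conjugacy plays no role in this proposition. It is only needed upstream, so that Lemma~\ref{lem:matching} can supply the hypothesis. For example, for a real quadratic pair $\delta=\bar\eps$ with $\abs{\eps}$ large, the hypothesis already fails at the pair $(1,2)$, since the second smallest entry there is $1/\abs{\eps}$. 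Likewise, the exponent $2$ does not come from $\eps^{(1)}$ and $\delta^{(1)}$ each contributing a factor $B$.

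The missing idea is to apply the hypothesis a second time, to a pair \emph{not} containing the index $1$, and to chain the two estimates using the ordering $\abs{\eps^{(1)}}\ge\dots\ge\abs{\eps^{(d)}}$. Let $j$ be an index with $\abs{\delta^{(j)}}$ minimal. Since $\abs{\eps^{(d)}}$ and $\abs{\delta^{(j)}}$ are the smallest factors in the two norm products, it suffices to show that one of them is at least $C^2\abs{\eps^{(1)}}$. Then either $1\ge C^{2(d-1)}\abs{\eps^{(1)}}^d$ or $1\ge C^{2d}\abs{\eps^{(1)}}^d$, and in both cases $\abs{\eps^{(1)}}\le C^{-2}$.

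Suppose instead that both are $<C^2\abs{\eps^{(1)}}<B$.
\begin{itemize}
\item Then $j\ne 1$ and $j\ne d$, since otherwise the pair $(1,d)$ would have two entries below $B$.
\item Hence the pair $(1,j)$ forces $\abs{\eps^{(j)}}\ge B$.
\item But then the pair $(j,d)$, with $j<d$, has the two entries $\abs{\eps^{(d)}}$ and $\abs{\delta^{(j)}}$ below $C^2\abs{\eps^{(1)}}=CB\le C\abs{\eps^{(j)}}$, contradicting the hypothesis for that pair.
\end{itemize}
This chaining through $(1,j)$ and then $(j,d)$ is exactly what the paper's proof does. The paper organises it into cases comparing $\abs{\delta^{(j)}}$ with $\abs{\eps^{(d)}}$ and $\abs{\eps^{(j)}}$. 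As written above, it needs no Case 1/Case 2 split at all.
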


\begin{proof}
  Let us denote by $j$ the index such that 
  $$\abs{\delta^{(j)}}=\min_{i=1,\dots,d}\left\{\abs{\delta^{(i)}}\right\}.$$

  First, let us assume that $\abs{\delta^{(j)}}<\abs{\eps^{(d)}}$. Assume that $j\neq 1,d$. Then we consider the unit equation
  $$\eps^{(1)}+\delta^{(1)}-\eps^{(j)}-\delta^{(j)} = 0$$
  and deduce by our assumption that $\abs{\eps^{(j)}}\geq m \geq C \abs{\eps^{(1)}}$.
  Next, we consider the unit equation
   $$\eps^{(j)}+\delta^{(j)}-\eps^{(d)}-\delta^{(d)} = 0$$
   and obtain
   $$\abs{\eps^{(d)}}\geq C \abs{\eps^{(j)}} \geq C^2 \abs{\eps^{(1)}}$$

 In case that $j=1$ or $j=d$, we consider
 $$\eps^{(1)}+\delta^{(1)}-\eps^{(d)}-\delta^{(d)} = 0$$
 and obtain $\abs{\eps^{(d)}}\geq C \abs{\eps^{(1)}}$. That is in any case we have $\abs{\eps^{(d)}}\geq C^2 \abs{\eps^{(1)}}$.

 Since $\eps$ is a unit, we have
 $$1=\abs{N_{L/\Q}(\eps)}=\abs{\eps^{(1)}}\cdot \abs{\eps^{(2)}}\cdots \abs{\eps^{(d)}} \geq \abs{\eps^{(1)}}^d C^{2(d-1)},$$
 which implies the proposition in this case.

 Now, we discuss the case that $\abs{\delta^{(j)}}\geq \abs{\eps^{(d)}}$ holds. Let us assume that $j\neq d$ and that $\abs{\eps^{(j)}}>\abs{\delta^{(j)}}\geq \abs{\eps^{(d)}}$. If $j\neq 1$ we consider the unit equation
  $$\eps^{(1)}+\delta^{(1)}-\eps^{(j)}-\delta^{(j)} = 0$$
  and deduce that $\abs{\eps^{(j)}}\geq C \abs{\eps^{(1)}}$. Next we consider the unit equation 
  $$\eps^{(j)}+\delta^{(j)}-\eps^{(d)}-\delta^{(d)}$$
  and deduce that 
  $$\abs{\delta^{(j)}}\geq C \abs{\eps^{(j)}}\geq C^2 \abs{\eps^{(1)}}.$$
  If $j=1$ we consider 
  $$\eps^{(1)}+\delta^{(1)}-\eps^{(d)}-\delta^{(d)} = 0$$
  and deduce that $\abs{\delta^{(j)}}\geq C \abs{\eps^{(1)}}$.
  
  Since $\delta$ is a unit we have
 $$1=\abs{N_{L/\Q}(\delta)}=\abs{\delta^{(1)}}\cdot \abs{\delta^{(2)}}\cdots \abs{\delta^{(d)}} \geq \abs{\eps^{(1)}}^d C^{2d},$$
 which implies the Proposition also in this case.

 We are left to consider the case that $\abs{\delta^{(j)}}\geq \abs{\eps^{(j)}}$. Let us assume that $j\neq 1$. We consider the 
 unit equation
  $$\eps^{(1)}+\delta^{(1)}-\eps^{(j)}-\delta^{(j)}$$
  and deduce that $\abs{\delta^{(j)}}\geq C \abs{\eps^{(1)}}$. Considering the norm of $\delta$ we deduce similarly as in the previous cases that
  $\abs{\eps^{(1)}}\leq C^{-1}$, which implies the statement of the proposition.

  Finally we are left with the case that $j=1$ and $\abs{\delta^{(1)}}\geq \abs{\eps^{(1)}}$. But, since we also assume that $$\abs{\eps^{(1)}}\geq \max_{i=1,\dots,d}\left\{\abs{\delta^{(i)}}\right\}.$$
  we obtain that $\abs{\delta^{(i)}}=\abs{\eps^{(1)}}$ for all $i=1,\dots,d$. Using again that $\delta$ is a unit we deduce that $\abs{\eps^{(1)}}=1<C^{-2}$.
\end{proof}

We combine the results of Lemma \ref{lem:X-eps-bound}, Lemma \ref{lem:matching} and Proposition \ref{prop:matching} with $C=\frac{\exp(-C_4\log X)}{4r}$ and obtain the inequality
\begin{equation}\label{eq:X-ieq}
X<C_2 \log \abs{\eps^{(1)}}<C_2\cdot 2\left(\log(4r) +C_4(\log X)\right).
\end{equation}
By an application of Lemma \ref{lem:PdW} we obtain $X<C_6$
with
$$C_6=4C_2\log(4r)+4C_2C_4\log (2C_2C_4).$$
By Lemma \ref{lem:X-eps-bound} we also deduce that $Y<C_7=C_3C_6$. Moreover, we note that $C_3>1$, hence $X,Y<C_7$ and we obtain by Lemma \ref{lem:X-eps-bound}
$$\max\{\log |\eps^{(i)}|,\log |\delta^{(i)}|\}<C_7/C_1.$$
Since for a unit $\alpha\in \ord_K^*$ we deduce from the definition of height that
$$h(\alpha)<\max_{1\leq i\leq d}\{\log |\alpha^{(i)}|\},$$
we deduce that $h(\eps),h(\delta)<C_7/C_1$. Since $n=\eps +\delta$, we also obtain 
$$\log  |n|<\log \left(2\max\{\log |\eps^{(i)}|,\log |\delta^{(i)}|\}\right)<2C_7/C_1=:C_8.$$

Let us record, what we have proved so far:

\begin{proposition}\label{prop:height-bound}
Let $K$ be a given number field and let $n\neq 0$ be an integer and $\eps,\delta \in \ord_K^*$ such that $\eps,\delta$ is not a pair of conjugate algebraic integers. Assume that $n=\eps+\delta$, then
 $$\max\{\log |n|,h(\eps),h(\delta)\}\leq C_8.$$
\end{proposition}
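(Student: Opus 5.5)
The proof assembles the chain of estimates already set up in this section. I work in $L=\Q(\eps)=\Q(\delta)$ of degree $d$, write $\eps=\zeta^{x_0}\eta_1^{x_1}\cdots\eta_t^{x_t}$ and $\delta=\zeta^{y_0}\eta_1^{y_1}\cdots\eta_t^{y_t}$, and normalise the conjugates as before, so that $\abs{\eps^{(1)}}$ dominates all conjugates of $\eps$ and $\delta$.

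First I dispose of degenerate cases. If $t=0$, then $\eps,\delta$ are roots of unity, so $\abs{n}\le 2$ and all heights vanish. If some two of $\eps^{(i)},\delta^{(i)},\eps^{(j)},\delta^{(j)}$ coincide in an instance of \eqref{eq:UEQ-Sys}, I must show that this forces the excluded conjugate situation or a bounded one:
\begin{itemize}
\item $\eps^{(i)}=\eps^{(j)}$ forces $\delta^{(i)}=\delta^{(j)}$, which is impossible for $i\ne j$ since $\sigma_i|_L\ne\sigma_j|_L$.
\item $\eps^{(i)}=\delta^{(i)}$ gives $\eps=n/2\in\Q$, a rational unit, so $\abs{n}=2$.
\item $\eps^{(i)}=\delta^{(j)}$ gives $\delta=\tau(\eps)$ for a conjugate embedding, i.e.\ $\eps$ and $\delta$ are conjugate, which is excluded.
\end{itemize}
In every non-excluded case, Lemma~\ref{lem:matching} therefore applies to each instance of \eqref{eq:UEQ-Sys}.

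Next I fix the constant $C$. If $X\le C_5$, then by Lemma~\ref{lem:X-eps-bound} we have $Y<C_3X$, and everything is bounded. Otherwise Lemma~\ref{lem:matching} gives $m\ge \abs{\eps^{(i)}}\,C$ with $C=\frac{\exp(-C_4\log X)}{4r}<1$. Proposition~\ref{prop:matching} then yields $\log\abs{\eps^{(1)}}\le 2\log(4r)+2C_4\log X$. Combining this with the upper bound $X\le C_2\log\abs{\eps^{(1)}}$ from Lemma~\ref{lem:X-eps-bound} gives inequality \eqref{eq:X-ieq}. Lemma~\ref{lem:PdW} with $h=1$ then gives $X<C_6$, and hence $Y<C_3X<C_7$.

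Finally I convert the exponent bounds back to heights. Each conjugate satisfies $\log\abs{\eps^{(i)}}\le X\,t\max_{k,\ell}\abs{\log\abs{\eta_k^{(\ell)}}}$, and similarly for $\delta$. Since for a unit the height is at most the maximum of $\log$ of its conjugate absolute values, this bounds $h(\eps)$ and $h(\delta)$. For $\log\abs{n}$, I use $\abs{n}\le \abs{\eps^{(1)}}+\abs{\delta^{(1)}}\le 2\abs{\eps^{(1)}}$, so $\log\abs{n}\le \log 2+\log\abs{\eps^{(1)}}$, which is bounded by the same quantity up to an additive constant, absorbed into $C_8$. Taking the maximum over the finitely many subfields $L\subseteq K$ makes $C_8$ depend only on $K$.

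I expect the main obstacle to be the degenerate coincidence step. One must check that every collapse of pairwise distinctness is either impossible or lands in the excluded conjugate case. One must also make sure that Proposition~\ref{prop:matching}'s hypothesis is applied uniformly to all pairs $i<j$, including those where Lemma~\ref{lem:matching} only gives the alternative $X\le C_5$. I handle this by splitting globally on whether $X\le C_5$ before invoking the matching argument.
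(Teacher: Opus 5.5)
Your proposal is correct and follows the paper's argument: take $C=\exp(-C_4\log X)/(4r)$ in Lemma~\ref{lem:matching} and Proposition~\ref{prop:matching}, combine with Lemma~\ref{lem:X-eps-bound} to get \eqref{eq:X-ieq}, apply Lemma~\ref{lem:PdW}, and convert the exponent bounds back to heights and to $\log|n|$. Your explicit treatment of $t=0$, of pairwise distinctness of $\eps^{(i)},\delta^{(i)},\eps^{(j)},\delta^{(j)}$, and of the global split on $X\le C_5$ fills in steps the paper leaves implicit, and your bound $\log|n|\le\log 2+\log\abs{\eps^{(1)}}$ is cleaner than the paper's final line.
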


Note that Theorem \ref{th:gen-bound} is an immediate consequence of Proposition \ref{prop:height-bound}. That is to compute $N_K^-$ we are left to consider the Diophantine equation
\begin{equation}\label{eq:conj} n=\eps^{(1)}+\eps^{(i)}
\end{equation}
for some $1\leq i\leq d$, with $d=[\Q(\eps):\Q]>2$.

\section{Restricting Galois groups} \label{sec:galoisgroups}

In this section, we prove constraints on the possible number fields for which equation~\eqref{eq:conj} has a solution. In the following we denote be $\Gal(L/\Q)$ the Galois group of the Galois closure of $L/\Q$.

\begin{lemma} \label{lem:galoisgroup}
    Let $L$ be a number field of degree $d$ and assume there exists a unit $\eps \in \ord_L^*$ such that $L = \Q(\eps)$ and $n - \eps$ is conjugate to $\eps$ for some nonzero rational integer $n$.  
    Then either $d=1$, or $d$ is even and $\Gal(L/\Q)$ is a subgroup of the wreath product $C_2 \wr S_{d/2}$ (as a degree $d$ permutation group).
\end{lemma}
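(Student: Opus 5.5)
Let $\sigma$ be an automorphism of the Galois closure $M$ of $L/\Q$ with $\sigma(\eps) = n - \eps$; it exists because $n-\eps$ is a conjugate of $\eps$. The plan is to show that the map $\phi\colon x \mapsto n - x$ acts as a fixed-point-free involution on the set $\Omega=\{\eps_1,\dots,\eps_d\}$ of conjugates of $\eps$. Since $\Gal(M/\Q)$ commutes with $\phi$, it then preserves the block system formed by the orbits of $\phi$.

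\emph{Step 1: $\phi$ maps $\Omega$ to itself.} Let $f$ be the minimal polynomial of $\eps$. The polynomial $g(X)=(-1)^d f(n-X)$ is monic, irreducible, and has $n-\eps$ as a root. Since $n-\eps$ is conjugate to $\eps$, we get $g=f$. Hence $\phi(\Omega)=\Omega$, and clearly $\phi\circ\phi=\mathrm{id}$. For every $\tau\in\Gal(M/\Q)$ we have $\tau(n-\eps_k)=n-\tau(\eps_k)$, because $n\in\Q$. So $\tau\phi=\phi\tau$ on $\Omega$.

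\emph{Step 2: fixed points.} Suppose some conjugate satisfies $\eps_k=n-\eps_k$. Then $\eps_k=n/2$ is rational, so $\eps$ is rational and $d=1$ (and in fact $\eps=\pm1$, $n=\pm2$). If $d>1$, then no conjugate is rational, so $\phi$ has no fixed points on $\Omega$. It therefore partitions $\Omega$ into $d/2$ pairs $\{\eps_k, n-\eps_k\}$, and in particular $d$ is even.

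\emph{Step 3: wreath product.} Because every $\tau$ commutes with $\phi$, $\tau$ sends the pair $\{\eps_k,\phi(\eps_k)\}$ to the pair $\{\tau\eps_k,\phi(\tau\eps_k)\}$. So the Galois group, acting faithfully on $\Omega$ (which is identified with the $d$ embeddings of $L$), preserves a system of $d/2$ blocks of size $2$. The full stabilizer in $S_d$ of such a block system is exactly $C_2\wr S_{d/2}$, so $\Gal(L/\Q)$ embeds into it as a degree-$d$ permutation group.

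The only step needing care is the identification of $\phi$ as a well-defined permutation of $\Omega$ (Step 1), and this is immediate from the irreducibility argument. The remaining steps are formal. The unit hypothesis is not really needed beyond $\eps$ being algebraic.
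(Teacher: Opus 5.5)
Your proof is correct and follows essentially the same route as the paper: the map $x\mapsto n-x$ pairs the $d$ conjugates into $d/2$ pairs summing to $n$, and since the Galois action commutes with this pairing it preserves the resulting block system, so it lies in $C_2\wr S_{d/2}$. The differences are cosmetic. You get the pairing from the symmetry $f(X)=(-1)^d f(n-X)$ of the minimal polynomial (which the paper only uses in the next lemma), rather than by applying embeddings to $n=\eps^{(1)}+\eps^{(i)}$ and proving uniqueness of the partner by subtraction. You also state explicitly the fixed-point-free check that the paper leaves implicit.
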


\begin{proof}
    Let us assume $n = \eps^{(1)}+\eps^{(i)}$ for some $1 \leq i \leq d$. If $i = 1$, then $\eps^{(1)}$ must be rational, and thus $d = 1$.

    Now assume $i \not = 1$, and thus $\eps^{(1)} \not = \eps^{(i)}$. By considering the various embeddings of $\eps$, we obtain that every index $j$ is contained in one of the representations of $n$
   $$\eps^{(j)}+\eps^{(k)}=n.$$
   Assume that there is another representation 
   $$\eps^{(j)}+\eps^{(\tilde k)}=n$$
   with $k \neq \tilde k$. Subtracting the two equations gives $\eps^{(k)} = \eps^{(\tilde k)}$, a contradiction. Hence the index $k$ is uniquely determined by $j$.  That is the pair of indices $(i_1,i_2)$ for which
   $$\eps^{(i_1)}+\eps^{(i_2)}=n$$
   form a partition of the set of all indices $\{i\: :\: 1\leq i \leq d\}$ into subsets of size two, which implies that $d$ is even. 
   Let this partition be $\{ (i_1, i_2), (i_3, i_4), \dots, (i_{d-1}, i_d) \}$, that is, we have
    \begin{equation} \label{eq:pairing}
        \eps^{(i_\ell)} + \eps^{(i_{\ell+1})} = n
    \end{equation}
   for every odd $\ell < d$.  Now let $\sigma \in \Gal(L/\Q)$.  We can consider $\sigma$ as a degree $d$ transitive permutation group where (by a slight abuse of notation) we can consider $G$ as a subgroup of $S_d$, where we define $\sigma(\ell)$ as $\eps^{(i_{\sigma(\ell)})} := \sigma ( \eps^{(i_\ell)} )$.  From \eqref{eq:pairing}, we have that 
   \begin{equation*}
       \eps^{(i_{\sigma(\ell)})} + \eps^{(i_{\sigma(\ell+1)})} = n
   \end{equation*}
   for all $\sigma \in \Gal(L/\Q)$. However, this implies that, for any $\sigma \in \Gal(L/\Q)$,  the set $\{ \sigma(\ell), \sigma(\ell+1) \}$ equals the set $\{ \ell', \ell' + 1 \} $, for some odd $\ell' < d$.  This therefore constrains the possible action of $\sigma \in \Gal(L/\Q)$.  Indeed, let $B_i$ denote the set $\{2i - 1, 2i\}$, for each $1 \leq i \leq d/2$. Then we have that
    \begin{equation*}
        \Gal(L/\Q) \subseteq \{ \sigma \in S_d \;|\; \text{for all } i \leq d/2, \; \sigma ( B_i) = B_j \text{ for some }  j \leq d/2 \} .
    \end{equation*}
    The group on the right hand side is precisely, by definition, the degree $d$ wreath product $C_2 \wr S_{d/2}$, i.e. the semidirect product $C_2^{d/2} \rtimes S_{d/2}$ where $S_{d/2}$ acts in the natural way on $d/2$ copies of $C_2$.  Therefore $\Gal(L/\Q) \subseteq C_2 \wr S_{d/2}$.
\end{proof}

As a consequence of the above proof, we can also show that, if $L$ is not rational, then $L$ must contain a subfield of index 2, and thus cannot be primitive.  We provide a short explicit proof of this below.

\begin{lemma}\label{lem:subfield}
    Let $L$ be a non-rational number field as in Lemma~\ref{lem:galoisgroup}.  Then $L$ contains a proper subfield of index 2 .
\end{lemma}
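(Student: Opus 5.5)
The plan is to exhibit the index-2 subfield directly as the field generated by the product of paired conjugates. Let $\eps$ be as in Lemma~\ref{lem:galoisgroup} with $L=\Q(\eps)$ non-rational, so $d\ge 2$ is even. Among the conjugates of $\eps$ there is one $\eps' \ne \eps$ with $\eps+\eps'=n$. The map $x\mapsto n-x$ sends the set of conjugates of $\eps$ to itself and pairs them off, as shown in the proof of Lemma~\ref{lem:galoisgroup}. The conjugate $\eps'=n-\eps$ lies in $L$, since it is a polynomial in $\eps$.

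The key observation is that $\eps'=n-\eps\in L$ is itself a conjugate of $\eps$. Hence $\tau:\eps\mapsto n-\eps$ defines a field embedding $L\to \C$ whose image is $\Q(n-\eps)=L$, so $\tau$ is an automorphism of $L$. It satisfies $\tau^2(\eps)=n-(n-\eps)=\eps$, so $\tau$ is an involution. It is nontrivial because $n-\eps\ne\eps$: otherwise $\eps=n/2$ would be rational, contradicting $d>1$.

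Let $F=L^{\langle\tau\rangle}$ be the fixed field. By Artin's theorem $[L:F]=2$, so $F$ is a proper subfield of index $2$. One can also describe $F$ concretely: it contains $\eps\tau(\eps)=\eps(n-\eps)$, and $L=F(\eps)$, where $\eps$ is a root of $X^2-nX+\eps(n-\eps)\in F[X]$.

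There is no real obstacle here. The only point that needs care is justifying that $n-\eps$ being a conjugate of $\eps$ yields an automorphism of $L$ and not merely an embedding. This holds because the conjugate lies in $L=\Q(\eps)$, so the image of the embedding $\Q(\eps)\to\C$ is contained in $L$, and equality follows by comparing degrees.
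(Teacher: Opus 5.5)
Your proof is correct. It uses the same underlying symmetry $x\mapsto n-x$ as the paper, but the mechanism that produces the index-$2$ subfield is different.

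The paper works with polynomials. It takes the minimal polynomial $f$ of $\eps$, notes $f(x)=f(n-x)$, and deduces that $g(x)=f(x+n/2)$ is even. Writing $g(x)=h(x^2)$, it concludes that $(\eps-n/2)^2$ generates a subfield of degree $d/2$. You instead realize the symmetry as an involution $\tau\in\mathrm{Aut}(L/\Q)$ and take its fixed field, so that $[L:F]=2$ follows at once from Artin's theorem.

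The two subfields coincide. Since $\tau(\eps-n/2)=-(\eps-n/2)$, you get $F\supseteq\Q((\eps-n/2)^2)=\Q(\eps(n-\eps))$, and both have index $2$ in $L$.

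Your route avoids the small bookkeeping the polynomial argument needs:
\begin{itemize}
\item the sign, since in general $f(n-x)=(-1)^d f(x)$, which requires $d$ even from Lemma~\ref{lem:galoisgroup};
\item the irreducibility of $h$, inherited from $g$, which is needed to get degree exactly $d/2$.
\end{itemize}
In fact your argument reproves that $d$ is even rather than using it, so your opening appeal to Lemma~\ref{lem:galoisgroup} is unnecessary. The paper's version, in exchange, gives an explicit generator of the subfield together with its minimal polynomial $h$.
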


\begin{proof}
    We have that $n = \eps^{(1)}+\eps^{(i)}$ for some $2 \leq i \leq d$.  Let $f(x) \in \Z[x]$ be the minimal polynomial of $\eps^{(1)}$ over $\Q$. As $\eps \not \in \Q$, $f(x)$ has degree at least 2. As $n - \eps^{(1)}$ is conjugate to $\eps^{(1)}$, we have that $f(x) = f(n-x)$.
    Now let $g(x)$ be the minimal polynomial of $\eps^{(1)} - n/2$, i.e. $g(x) := f(x + n/2)$.  Thus, we have that
    \begin{equation*}
        g(x) = f(x + n/2) = f(n - x - n/2) = g(-x),
    \end{equation*}
    which implies $g(x)$ is an even polynomial, and thus $g(x) = h(x^2)$ for some degree $d/2$ polynomial $h \in \Z[x]$.  In particular, the element $(\eps^{(1)} - n/2)^2$ has minimal polynomial of degree $d/2$ over $\Q$ and thus generates a subfield of index 2 in $L$.
\end{proof}

Note that a combination of Proposition \ref{prop:height-bound}, Lemma \ref{lem:galoisgroup} and Lemma \ref{lem:subfield} yields Corollary~\ref{cor:galgroup}.

\medskip
\noindent \textbf{Example.} Let us consider a degree 4 number field $L$ which satisfies the conditions of Lemma~\ref{lem:galoisgroup}.  Then $\Gal(L/\Q)$ must be contained in the wreath product $C_2 \wr S_2$, and thus must be one of the following three imprimitive degree 4 transitive permutation groups: $C_4$, $C_2 \times C_2$, or $D_4$.  In particular $\Gal(L/\Q)$ cannot be either of the two primitive degree 4 groups $A_4$ or $S_4$.

We remark that the constraints on the Galois group $\Gal(L/\Q)$ in Lemma~\ref{lem:galoisgroup} cannot be improved, at least in the degree 4 case.  In particular, for each of the three imprimitive degree 4 Galois groups, we can find examples of number fields $L$ containing a unit $\eps \in \ord_L^*$ which is conjugate to $1 - \eps$, as shown in Table~\ref{tab:galgroup}.

\begin{table}[h]
\centering
\caption{Examples of imprimitive degree 4 fields $L$ containing a primitive unit $\eps \in \ord_L^*$ such that $1 - \eps$ is a Galois conjugate of $\eps$.}
\label{tab:galgroup}
\begin{tabular}{@{}ccc@{}}
\toprule
\textbf{LMFDB label of $\Q(\eps)$} & \textbf{Minimal polynomial of $\eps$} & \textbf{Galois group} 
\\ \midrule

4.0.125.1 &  $x^4 - 2x^3 + 4x^2 - 3x + 1$ &  $C_4$ (as 4T1) \\[2mm]
4.0.144.1 &  $x^4 - 2x^3 + 5x^2 - 4x + 1$   & $C_2 \times C_2$ (as 4T2) \\[2mm]
4.0.117.1 & $x^4 - 2x^3 + 2x^2 - x + 1$ & $D_4$ (as 4T3) \\ 
\bottomrule
\end{tabular}
\end{table}

It is worth noting that, although an effective algorithm to compute $\NK^-$ for every degree 4 field remains out of reach, we have that the results of Bhargava \cite[Theorem~4]{Bhargava2005} and Cohen--Diaz y Diaz--Olivier \cite[Corollary~1.4]{CDO2002} imply that about 83\% of quartic fields (ordered by discriminant) have Galois group $S_4$, with the remaining 17\% having Galois group $D_4$. Consequently, we can thus apply Theorem~\ref{th:gen-bound} to 83\% of quartic fields (ordered by discriminant).

\section{Example: A quintic field} \label{sec:quintic_example}

Let $K=\Q(\alpha)$, where $\alpha$ is a root of $X^5-5X^3-X^2+5X+1$.  In this section, we derive an explicit bound for the constant $C$ in Theorem~\ref{th:gen-bound} applied to the field $K$, and use LLL-reduction to give a proof of Theorem~\ref{th:quintic}.

Since $[K:\Q]=5$ is odd, we find a first bound for 
$$\max_{i=1,2,3,4,5}\left\{\left|\eps^{(i)}\right|,\left|\delta^{(i)}\right|\right\}$$
for a representation $n=\eps+\delta$ of an integer $n$, by computing the bounds provided by Lemma \ref{lem:matching} and Proposition \ref{prop:height-bound}. That is we aim to compute the constants $C_1,\dots,C_8$.

Using SageMath \cite{sagemath}, we obtain that a fundamental system of units $\eta_1, \dots, \eta_4$ for $K$ can be given by
\begin{align}
\eta_1 &= \alpha^2 - 2,& \eta_2&= -\alpha^4 + \alpha^3 + 4\alpha^2 - 2\alpha - 3, \label{eq:etas} \\
\eta_3&= -\alpha^4 + \alpha^3 + 3\alpha^2 - 2\alpha - 1,& \eta_4&= \alpha^4 - \alpha^3 - 4\alpha^2 + 2\alpha + 2. \nonumber
\end{align}
Computing the real embeddings from $K\to \R$ we obtain the matrix
$$R=\left(\log|\eta_i^{(j)}|\right)=\left(\begin{array}{rrrr}
-0.1300006 & 0.2316942 & 0.7602585 & -1.344254 \\
-0.9951165 & 0.2892002 & 0.5335882 & 0.8481656 \\
0.6729581 & 0.8960184 & -0.7137523 & 0.3714459 \\
-0.3246052 & -1.080787 & -0.9584567 & -0.4144984 \\
0.7767642 & -0.3361260 & 0.3783623 & 0.5391408
\end{array}\right).$$
Note that this matrix $R$ represents a precision of only $30$-bits. We made our computations with $1000$-bit precision and $10\,000$-bit precision in case of LLL-reduction. For reason of space we present only the $30$-bit precision.

From this matrix it is easy to compute the constants
\begin{align*}
    C_1 &= \frac 1{t\max_{i=1,\dots,t}\left\{\log\abs{\eta^{(1)}_i}\right\}}=0.27901212 ,
    \\[2mm]
    C_2 &= t\frac{\max_{1\leq i,j\leq t}\left\{\abs{\det R_{i,j}}\right\}}{\Reg_K}=1.46930513 , \\[2mm]
    C_3 &= \frac{C_2}{C_1}=5.26609783.
\end{align*}

Next, we aim to compute the constants $C_4$ and $C_5$. That is, we have to compute the modified heights $h_m(\eps_i)$ first. In particular we obtain:
\begin{gather*}
h_m(\eps_1)=7.24861112, \qquad h_m(\eps_2)=7.08456406,\\
h_m(\eps_3)=8.36104504,\qquad  h_m(\eps_4)=8.79376172, 
\end{gather*}
Let us note that in the computation of $C_4$ we have to distinguish between the three cases considered in the proof of Lemma \ref{lem:matching}.

Let us note that the largest constants appear in the second case. There we obtain
$$C_4<2.257\cdot 10^{30}$$
provided that $X<C_5=68.133$. Now, we solve inequality \eqref{eq:X-ieq} directly instead of using Lemma \ref{lem:PdW} and obtain
$$X<C_6=1.008 \cdot 10^{33}.$$
Therefore we obtain
$$
\log|\eps^{(1)}|=\max\{\log|\eps^{(i)}|,\log|\delta^{(i)}|\}< C_7/C_1 = C_8/2 < 
1.91\cdot 10^{34}. 
$$

Since this bound is far too huge for a brute force search, we use the LLL-reduction method to get a much lower bound for $\max\{\log|\eps^{(i)}|,\log|\delta^{(i)}|\}$.

We want to find a small upper bound $B$ for $\max_{1\leq i\leq 5} \{\log|\eps^{(i)}|\}$. From the previous section we have $B=1.91\cdot 10^{34}$. We want to apply Lemma \ref{lem:real-reduce}. That is, we consider for $1\leq i,j\leq 5$ with $i\neq j$ the linear forms
\begin{align} \label{eq:L1}
\Lambda_1^{(i,j)}&=x_1 \log \abs{\frac{\eta_1^{(j)}}{\eta_1^{(i)}}} +x_2 \log \abs{\frac{\eta_2^{(j)}}{\eta_2^{(i)}}} +x_3 \log \abs{\frac{\eta_3^{(j)}}{\eta_3^{(i)}}} +x_4\log \abs{\frac{\eta_4^{(j)}}{\eta_4^{(i)}}} , \\
\label{eq:L2}
\Lambda_2^{(i,j)}&=y_1 \log \abs{\eta_1^{(j)}} +y_2 \log \abs{\eta_2^{(j)}} +y_3 \log \abs{\eta_3^{(j)}}+y_4\log \abs{\eta_4^{(j)}} \\
\nonumber &\qquad-x_1 \log \abs{\eta_1^{(i)}} -x_2 \log \abs{\eta_2^{(i)}} -x_3 \log \abs{\eta_3^{(i)}}-x_4\log \abs{\eta_4^{(i)}} , \\
\label{eq:L3}
\Lambda_3^{(i)}&=(y_1-x_1) \log \abs{\eta_1^{(i)}} +(y_2-x_2) \log \abs{\eta_2^{(i)}} +(y_3-x_3) \log \abs{\eta_3^{(i)}}\\
\nonumber &\qquad +(y_4-x_4)\log \abs{\eta_4^{(i)}} .
\end{align}

Let us note that $\max\{\log|\eps_i^{(j)}|\}\leq B$ is equivalent to 
\begin{equation}\label{eq:LinProgPolytop}
R\cdot \mathbf x\leq \mathbf B,
\end{equation}
with $\mathbf x=(x_1,x_2,x_3,x_4)^T\in \Z^4$ and $\mathbf B=(B,B,B,B,B)^T$. Let us denote the set of solutions to \eqref{eq:LinProgPolytop} by $M$. In view of Lemma \ref{lem:real-reduce} we want to find
$$T=\max_{\mathbf x\in M} \{ |x_1|+|x_2|+|x_3|+|x_4|\}$$
and
$$S=\max_{\mathbf x\in M} \{x_1^2+x_2^2+x_3^2\}.$$
For the application of Lemma \ref{lem:real-reduce} to \eqref{eq:L2} we also have to compute
$$S'=\max_{\mathbf x\in M} \{x_1^2+x_2^2+x_3^2+x_4^2\}.$$
Let us note that the polytope described by \eqref{eq:LinProgPolytop} is a simplex and therefore convex. That is the functions $x_1^2+x_2^2+x_3^2$ and $x_1^2+x_2^2+x_3^2+x_4^2$ take their maximal value in one of the vertices of the polytope. That is we obtain
$$
S\leq 13.8511646 \cdot B^4\qquad\text{and}\qquad S'\leq 15.38008419 \cdot B^4.
$$
To determine $T$ we want to maximize the $16$ linear functions $\pm x_1\pm x_2 \pm x_3 \pm x_4$ subject to \eqref{eq:LinProgPolytop} and take the maximal value of the $16$ maximal values. This can be easily done in Sage \cite{sagemath}. In the case of $B=1.91\cdot 10^{34}$ we obtain $T=1.4581 \cdot 10^{35}$.

With these computations we apply Lemma \ref{lem:real-reduce} to each of the linear forms $\Lambda_1^{(i,j)}$ using the parameters $S_1=S$ and $T_1=\frac{1+T}2$ and $\tilde C_1=10^{150}$. With these parameters we obtain that either $x_1=x_2=x_3=x_4=0$ or $C_0<|\Lambda_1^{(i,j)}|$ for some $C_0$. For the linear forms $\Lambda_2^{(i,j)}$ we use the parameters $S_2=S+S'$ and $T_2=\frac{1+2T}2$ and $\tilde C_2=10^{290}$ to obtain a lower bound for $|\Lambda_2^{(i,j)}|$. In the case of the linear forms $\Lambda_3^{(i)}$ we use the parameters $S_3=4S$ and $T_2=\frac{1+2T}2$ (note that $|y_i-x_i|\leq 2|x_i|$) and $\tilde C_2=10^{150}$ to obtain a lower bound for $|\Lambda_3^{(i)}|$.
This yields a lower bound for $C$ in Lemma \ref{lem:matching} and by Proposition \ref{prop:matching} we obtain a smaller bound $B$ such that 
$$\max_{1\leq i\leq 5} \{\log|\eps^{(i)}|\}\leq B.$$
Indeed we obtain $B=1172.6$ with the above mentioned choices.

We repeat this process with new choices for $\tilde C_1, \tilde C_2$ and $\tilde C_3$ six times more to obtain an even smaller bound $B$. For the choices for  $\tilde C_1, \tilde C_2$ and $\tilde C_3$ and the respectively new bound $B$ we refer to Table \ref{tab:LLL-red}.

\begin{table}[h]
\caption{Choice of $\tilde C_1, \tilde C_2$ and $\tilde C_3$ and the resulting bound $B$.}\label{tab:LLL-red}

\begin{tabular}{ccccc}
\toprule
\textbf{Steps} & \textbf{Choice for $\tilde C_1$} & \textbf{Choice for $\tilde C_2$} & \textbf{Choice for $\tilde C_3$} & \textbf{New bound $B$} \\\midrule
1 & $10^{150}$ & $10^{290}$ & $10^{150}$ &  $1172.6$ \\ 
2 & $10^{18}$ & $10^{34}$ & $10^{18}$ & $146.32$ \\ 
3 & $10^{14}$ & $9\cdot 10^{26}$ & $10^{14}$ & $115.8$  \\ 
4 & $10^{14}$ & $10^{26}$ & $10^{14}$ & $112.29$  \\ 
5 & $10^{14}$ & $4\cdot 10^{25}$ & $10^{14}$ & $111.63$  \\ 
6 & $10^{14}$ & $4\cdot 10^{25}$ & $10^{14}$ & $111.4971$  \\ 
7 & $10^{14}$ & $4\cdot 10^{25}$ & $10^{14}$ & $111.49$  \\ 
\bottomrule
\end{tabular}
\end{table}

That is, we have proved so far.

\begin{lemma}\label{lem:bound-after-LLL}
Assume that there exist units $\eps,\delta\in \ord_K$ with $n=\eps+\delta$. Then we have
$$\max_{1\leq i\leq 5}\left\{\log \abs{\eps^{(i)}}\right\},\max_{1\leq i\leq 5}\left\{\log \abs{\delta^{(i)}}\right\}\leq 111.49.$$
\end{lemma}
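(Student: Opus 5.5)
The plan is to start from the effective bound of Proposition~\ref{prop:height-bound} and sharpen it iteratively with Lemma~\ref{lem:real-reduce}, as described just before the statement.

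\emph{Step 1 (reduction to the non-conjugate case).} Since $[K:\Q]=5$ is odd, $L=\Q(\eps)$ is either $\Q$ or $K$ itself. If $\eps\in\Q$ then $\eps,\delta\in\{\pm1\}$ and the bound is trivial. Otherwise $d=5$. By Lemma~\ref{lem:galoisgroup}, $\eps$ and $\delta$ cannot be conjugate, since $d$ is odd and not $1$. Hence the hypotheses of Lemma~\ref{lem:matching} and Proposition~\ref{prop:matching} apply.

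\emph{Step 2 (initial bound).} Next we order the embeddings so that $\abs{\eps^{(1)}}$ is maximal among all $\abs{\eps^{(i)}},\abs{\delta^{(i)}}$. This is harmless because the roles of $\eps,\delta$ are symmetric, so it suffices to bound $\log\abs{\eps^{(1)}}$. The explicit values of $C_1,\dots,C_8$ computed above give $B_0=1.91\cdot10^{34}$.

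\emph{Step 3 (iterative reduction).} Given a current bound $B$, the exponent vectors lie in the polytope $R\mathbf x\le\mathbf B$. From this polytope we compute $S,S',T$ by evaluating at its vertices and by linear programming over the $16$ sign patterns. We then build the de~Weger lattices for each linear form $\Lambda_1^{(i,j)},\Lambda_2^{(i,j)},\Lambda_3^{(i)}$, using the chosen constants $\tilde C$ and working in $1000$/$10\,000$-bit precision. LLL-reduction together with Lemma~\ref{lem:lattice} gives $\tilde c_1$. We then verify $\tilde c_1^2\ge T^2+S$ and conclude from Lemma~\ref{lem:real-reduce} that either the relevant exponents vanish or $|\Lambda|>c_0$ for an explicit $c_0$.

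The vanishing case is excluded because the four units involved in each equation \eqref{eq:UEQ-Sys} are pairwise distinct. If some pair among them coincides instead, the equation degenerates: it either forces the remaining two to coincide as well, or, if some $\eps^{(i)}=\delta^{(i)}$, then $\eps=n/2$ is rational. These degenerate subcases need a separate check, and I expect this to be the fiddliest logical point. Following the proof of Lemma~\ref{lem:matching}, $|\Lambda|\le 4m/\abs{\eps^{(i)}}$ (in the totally real case $r=1$). So $|\Lambda|>c_0$ yields $m\ge \abs{\eps^{(i)}}c_0/4$, or else the trivial alternative $m>\abs{\eps^{(i)}}/4$ holds. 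Proposition~\ref{prop:matching} with $C=\min(c_0,1)/4$ then gives the new bound $B=2\log(4/\min(c_0,1))$.

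\emph{Step 4 (iteration and termination).} We iterate according to Table~\ref{tab:LLL-red} until the bound stabilizes at $111.49$. The main obstacle is numerical. At each step we must choose $\tilde C$ large enough that the condition $\tilde c_1^2\ge T^2+S$ holds for every pair $(i,j)$, including the $8$-dimensional lattices for $\Lambda_2$, while keeping $\tilde C$ small enough that the bound actually decreases. We also need enough precision that the rounding errors $[\tilde C\eta]$ are controlled. If the condition fails for some form, the first remedy is to increase $\tilde C$ for that form only, as was done in the table.
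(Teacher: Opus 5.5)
Your proposal is correct and follows essentially the same route as the paper. You start from $B=1.91\cdot 10^{34}$, apply Lemma~\ref{lem:real-reduce} to $\Lambda_1^{(i,j)},\Lambda_2^{(i,j)},\Lambda_3^{(i)}$ with $S,S',T$ read off from the polytope $R\mathbf{x}\le\mathbf{B}$, feed the resulting lower bound into Lemma~\ref{lem:matching} and Proposition~\ref{prop:matching}, and iterate as in Table~\ref{tab:LLL-red} down to $111.49$.

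The degenerate coincidences you flag as the fiddliest point are in fact immediate, so the four units are always pairwise distinct:
\begin{itemize}
\item $\eps^{(i)}\neq\eps^{(j)}$ and $\delta^{(i)}\neq\delta^{(j)}$ for $i\neq j$, because $\eps$ and $\delta$ have degree $5$;
\item $\eps^{(i)}=\delta^{(j)}$ is exactly the conjugate case you excluded in Step~1;
\item $\eps^{(i)}=\delta^{(i)}$ forces $\eps=n/2\in\Q$.
\end{itemize}
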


Let us note that one can easily show that the lemma implies $n \leq 2.45 \cdot 10^{65}$. However, we will not consider this bound for our next step.

Consider an integer quadruple $(x_1,x_2,x_3,x_4)\in \Z^4$ that satisfies \eqref{eq:LinProgPolytop} with $B=111.49$. Let $\eps=\pm \eta_1^{x_1}\eta_2^{x_2}\eta_3^{x_3}\eta_4^{x_4}$, then we want to determine whether there exists a positive integer $n$ and a unit $\delta\in \ord_K^*$ such that $\eps+\delta=n$. Let us note that if $n=\eps+\delta$ then we also have $-n=-\eps-\delta$ and therefore it is sufficient to only consider those units $\eps$ with $\eps=\eta_1^{x_1}\eta_2^{x_2}\eta_3^{x_3}\eta_4^{x_4}$.

Let us assume that $|n|>1000$ and choose the index $j$ such that
$$\abs{n-\eps^{(j)}}=\min_{1\leq i\leq 5}\left\{\abs{n-\eps^{(i)}}\right\}<1.$$
By exchanging the roles of $\eps$ and $\delta$ we may also assume that
$$\abs{n-\eps^{(j)}}\leq \min_{1\leq i\leq 5}\left\{\abs{n-\delta^{(i)}}\right\}.$$
Assume for the moment that $\abs{n-\eps^{(j)}}\geq 1/2$, then we obtain for all $i=1,\dots,5$ the lower bound
$$\abs{\eps^{(i)}}=\abs{n-\delta^{(i)}}\geq \abs{n-\eps^{(j)}}\geq 1/2$$
and therefore we deduce by the product formula that $\max_{1\leq i\leq 5}\{\abs{\eps^{(i)}}\}\leq 16$. But, this implies 
$$|n|\leq \abs{n-\eps^{(j)}}+\abs{\eps^{(j)}}< 1+16= 17,$$
which contradicts our assumption that $|n|> 1000$. Therefore, we may assume that 
$$\abs{n-\eps^{(j)}}< \frac 12.$$
This also implies $n=\left\lceil \eps^{(j)} \right\rfloor$, where $\lceil x\rfloor$ denotes the nearest integer to $x$.

Next, we have for $i\neq j$ the following inequality:
$$2\abs{n-\eps^{(i)}}\geq \abs{n-\eps^{(i)}}+\abs{n-\eps^{(j)}}\geq\abs{\eps^{(i)}-\eps^{(j)}}. $$
Therefore we obtain 
\begin{equation}\label{eq:Test-eps}
\abs{n-\eps^{(j)}}=\prod_{i\neq j} \frac{1}{\abs{n-\eps^{(i)}}}\leq \frac{16}{\prod_{i\neq j}\abs{\eps^{(i)}-\eps^{(j)}}}.
\end{equation}

Let $M\subseteq \Z^4$ be the set of integer quadruples $(x_1,x_2,x_3,x_4)\in \Z^4$ that satisfies \eqref{eq:LinProgPolytop} with $B=111.49$. For a given quadruple $\mathbf x=(x_1,x_2,x_3,x_4)\in M$ we proceed as follows:
\begin{enumerate}
\item Compute $\eps=\eta_1^{x_1}\eta_2^{x_2}\eta_3^{x_3}\eta_4^{x_4}$ and compute all its embeddings $\eps^{(i)}$ to a high enough precision. That is the numerical computation of $\left\lceil \eps^{(j)} \right\rfloor$ is correct. Since we have $\max_{1\leq i\leq 5}\left\{\log \abs{\eps^{(i)}}\right\}<111.49$ a $10\,000$-bit precision will be sufficient.
\item Consider all indices $j$ such that $n=\left\lceil \eps^{(j)} \right\rfloor>1000$.
\item Compute $\beta=\abs{n-\eps^{(j)}}$ and 
$$u=\frac{16}{\prod_{i\neq j}\abs{\eps^{(i)}-\eps^{(j)}}}.$$
\item If $\beta> u$, discard $\eps^{(i)}$, otherwise add $n$ to the list of possible integers that admit a representation of the form $n=\eps+\delta$, with $\eps, \delta\in \ord_K^*$.
\end{enumerate}

Let us discuss how to numerate the quadruples $\mathbf x=(x_1,x_2,x_3,x_4)\in M$. Therefore we note thatby maximizing $x_1$ and $-x_1$ subject to \eqref{eq:LinProgPolytop} we get an upper bound $u_1$ and a lower bounds $l_1$ for $x_1$.

Now, for each $x_1$ with $l_1\leq x_1\leq u_1$ we consider the polytope given by
\begin{equation}\label{eq:Lin-Prog-2}
R_2 \cdot \left(\begin{array}{c} x_2 \\ x_3 \\ x_4 \end{array}\right) \leq \mathbf B_{x_1}
\end{equation}
where $R_2$ is the matrix which we get from $R$ by the deleting the first column and $B_{x_1}$ is the vector
$$
\mathbf B_{x_1}=\left(\begin{array}{c} B\\ B\\B \\ B\\ B\end{array}\right)-x_1\left(\begin{array}{c} \eta_1^{(1)}\\ \eta_1^{(2)}\\ \eta_1^{(3)} \\  \eta_1^{(4)}\\  \eta_1^{(5)}\end{array}\right).
$$
Maximizing $x_2$ and $-x_2$ subject to \eqref{eq:Lin-Prog-2} yields upper and lower bounds for $x_2$ for each $x_1$.

Similarly we obtain upper and lower bounds for $x_3$ for given $x_1$ and $x_2$. And finally for given $x_1,x_2$ and $x_3$ we obtain also upper and lower bounds for $x_4$.

The computation were carried out for all $\mathbf{x}=(x_1,x_2,x_3,x_4)\in M$ out on a standard desktop computer; the total running time was approximately 23 days on a single core. Let us note that $|M|=1\,676\,902\,296 \simeq 1.68 \cdot 10^9$.

Since for no instance we found a quadruple $\mathbf{x}=(x_1,x_2,x_3,x_4)\in M$ that satisfies inequality \eqref{eq:Test-eps}, we have the following:

\begin{lemma}\label{lem:only-small-n}
Assume that there exist units $\eps,\delta\in \ord_K^*$ and a rational integer $n$ such that $n=\eps+\delta$. Then we have $|n| \leq 1000$.
\end{lemma}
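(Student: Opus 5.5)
The plan is to combine Lemma~\ref{lem:bound-after-LLL} with the exhaustive search just described, and to justify that the search catches every solution with $|n|>1000$. Let $n=\eps+\delta$ with $|n|>1000$. Replacing $(\eps,\delta,n)$ by $(-\eps,-\delta,-n)$ if needed, we may take $n>1000$. Since $\ord_K^*=\{\pm1\}\times\langle\eta_1,\dots,\eta_4\rangle$, we write $\eps=\pm\eta_1^{x_1}\cdots\eta_4^{x_4}$. The first thing to check is that the sign issue has been handled: the sign normalisation must be applied to whichever of $\eps,\delta$ plays the role of the tested unit, and the search also uses indices $j$ with $\lceil\eps^{(j)}\rfloor>1000$ in absolute value. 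Lemma~\ref{lem:bound-after-LLL} then gives $R\mathbf x\le\mathbf B$ with $B=111.49$ for both exponent vectors, so both lie in the finite set $M$.

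Next I follow the reduction in the text. Choose $j$ and order $\eps,\delta$ so that $|n-\eps^{(j)}|$ is minimal among all $|n-\eps^{(i)}|$ and $|n-\delta^{(i)}|$. Some $|n-\eps^{(i)}|=|\delta^{(i)}|<1$ must occur, since $\prod_i|\delta^{(i)}|=1$. If $|n-\eps^{(j)}|\ge 1/2$, then every $|\eps^{(i)}|\ge1/2$, and the product formula gives $\max_i|\eps^{(i)}|\le 2^4=16$. This forces $n<17$, a contradiction. Hence $|n-\eps^{(j)}|<1/2$, so $n=\lceil\eps^{(j)}\rfloor$ is recovered from $\mathbf x$ and $j$ alone, which is exactly step~(2) of the algorithm. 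For $i\ne j$ the triangle inequality gives
\[
2|n-\eps^{(i)}|\ge|\eps^{(i)}-\eps^{(j)}|,
\]
and the norm identity $\prod_i|n-\eps^{(i)}|=\prod_i|\delta^{(i)}|=1$ then yields \eqref{eq:Test-eps}.

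The conclusion follows. Any solution with $n>1000$ produces some $\mathbf x\in M$ and some index $j$ that pass the test $\beta\le u$ in step~(4). The computation found no such pair, so no solution with $|n|>1000$ exists.

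The main obstacle is the rigour of the numerics rather than the logic. The enumeration of $M$ through the nested linear programs must be complete, so the bounds for $x_{k}$ should be widened by rounding outward. Floating-point evaluation with $10\,000$ bits must also be sufficient to decide the nearest integer and the comparison $\beta\le u$. I would handle this with interval arithmetic, or with an explicit error estimate using $\log|\eps^{(i)}|\le111.49$ and a lower bound for $|\eps^{(i)}-\eps^{(j)}|$.
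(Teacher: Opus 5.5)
Your proposal is correct and follows the paper's own argument: Lemma~\ref{lem:bound-after-LLL} puts the exponent vector in $M$, and the minimality and product-formula argument forces $|n-\eps^{(j)}|<1/2$ and inequality \eqref{eq:Test-eps}. The exhaustive search over $M$ then finds no pair $(\mathbf x,j)$ passing the test. Your sign remark is a genuine tightening of step~(2), which as literally written only tests $\lceil\eps^{(j)}\rfloor>1000$; once the tested unit is normalised to $\eta_1^{x_1}\cdots\eta_4^{x_4}$, the search must test $|\lceil\eps^{(j)}\rfloor|>1000$, as the paper's own derivation under $|n|>1000$ requires. Your points on outward rounding and interval arithmetic are also the right way to make the computation rigorous.
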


Since we only have to consider positive $n$ we solve the $1000$ unit equations
\begin{equation} \label{eq:quintic}
    n=\eps+\delta
\end{equation}
for each $1 \leq n \leq 1000$.  This can easily be done in Magma \cite{magma} with the function \texttt{UnitEquation}, implemented using Wildanger's method \cite{Wildanger}. This took roughly 3 CPU-hours running on a single core.

Solutions to the unit equation $\eps + \delta = n$ were found for each $n = 1$, $2$, $3$, $4$, $5$, $6$, $7$, $10$, $11$, $15$, $251$, for a total of $\numquinticsols$ non-equivalent pairs of solutions $(\eps, \delta)$.  A summary of the number of solutions found for each $n$ is given in Table~ \ref{tab:quintic_summary}.

\begin{table}[h]
\centering
\caption{For each $n \in \NK$, we tabulate the number of non-equivalent solutions to $\eps + \delta = n$ over the smallest totally real degree 5 quintic field $K$.}
\label{tab:quintic_summary}
\begin{tabular}{@{}c@{\hskip 5mm}ccccccccccc@{}}
\toprule
$n$ & \textbf{1} & \textbf{2} & \textbf{3} & \textbf{4} & \textbf{5} & \textbf{6} & \textbf{7} & \textbf{10} & \textbf{11} & \textbf{15} & \textbf{251} \\ \midrule
\begin{tabular}[c]{@{}c@{}}Number of non-equivalent \\ pairs $(\eps, \delta)$ such that $\eps + \delta = n$. \end{tabular} & 168 & 44 & 16 & 8 & 9 & 2 & 4 & 1 & 1 & 1 & 1 \\ \bottomrule
\end{tabular}
\end{table}

A full tabulation of all solutions found is given in Table~\ref{tab:quinticsolutions}.  In particular, we have that $X = \max_{i = 1, 2, 3, 4} |x_i| \leq 10$ for all possible solutions, with the largest solution to $\eps + \delta = n$ being 
\begin{equation*}
    \eps = + \eta_1^{-4} \eta_2^{10} \eta_3^0 \eta_4^{-2}, \quad \delta = - \eta_1^{-1} \eta_2^{-8} \eta_3^1 \eta_4^6, \quad \text{and} \quad n = 251 .
\end{equation*}
This completes the proof of Theorem~\ref{th:quintic}.

\bibliographystyle{abbrv}
\bibliography{NK}

\begin{longtable}{@{}c@{\hskip 5mm}c@{}}
\caption{List of all solutions to $\eps + \delta = n$ over the totally real degree~5 field $K$ with defining polynomial $X^5 - 5X^3 - X^2 + 5X + 1$, given up to equivalence. All solutions were computed with Magma~\cite{magma}. Here, $\eta_1, \eta_2, \eta_3, \eta_4$ are a fundamental system of units for $K$, given in (\ref{eq:etas}).}
\label{tab:quinticsolutions} \\

\toprule
\textbf{$n$} & \textbf{Tuples $(\pm, x_1, x_2, x_3, x_4)$ where $\eps = \pm \eta_1^{x_1} \eta_2^{x_2} \eta_3^{x_3} \eta_4^{x_4}$ satisfies (\ref{eq:quintic})} \\* \midrule
\endfirsthead
\multicolumn{2}{c}%
{{ %
Table \thetable\ continued from previous page}} \\
\toprule
\textbf{$n$} & \textbf{Tuples $(\pm, x_1, x_2, x_3, x_4)$ where $\eps = \pm \eta_1^{x_1} \eta_2^{x_2} \eta_3^{x_3} \eta_4^{x_4}$ satisfies (\ref{eq:quintic})}  \\* \midrule
\endhead
\bottomrule
\endfoot
\endlastfoot

$\mathbf{1}$ &  \begin{tabular}[c]{@{}c@{}}

\\[-2mm]
 $(-,0,0,0,1), \, (-,0,0,0,-1), \, (-,0,0,-1,0), \, (-,0,0,1,1), \, (-,0,0,1,-1), $ \\ 
 $(-,0,0,1,2), \, (-,0,0,-1,-2), \, (-,0,0,2,-1), \, (-,0,0,-2,1), \, (+,0,1,0,0), $ \\ 
 $(+,0,-1,0,0), \, (-,0,-1,0,0), \, (-,0,1,0,1), \, (-,0,-1,0,-1), \, (-,0,-1,0,2), $ \\ 
 $(-,0,1,1,0), \, (+,0,1,-1,0), \, (-,0,1,-1,0), \, (+,0,-1,1,0), \, (-,0,-1,1,0), $ \\ 
 $(-,0,-1,-1,0), \, (-,0,1,1,-1), \, (+,0,1,-1,1), \, (-,0,-1,-1,1), \, (-,0,1,-2,1), $ \\ 
 $(+,0,1,-2,2), \, (-,0,1,-2,2), \, (+,0,-1,2,-2), \, (-,0,-1,2,-2), \, (+,0,-1,-5,-3), $ \\ 
 $(+,0,-2,0,0), \, (+,0,2,-1,0), \, (+,0,-2,1,0), \, (+,0,2,-2,0), \, (+,0,-2,-2,0), $ \\ 
 $(+,0,2,2,-1), \, (+,0,-2,-2,1), \, (+,0,-2,4,-4), \, (-,0,3,0,-1), \, (-,0,-3,0,1), $ \\ 
 $(+,0,3,-1,0), \, (+,0,-3,1,0), \, (+,0,3,1,1), \, (+,0,-3,-1,-1), \, (-,0,4,-3,-1), $ \\ 
 $(+,1,0,0,0), \, (-,1,0,0,0), \, (+,-1,0,0,0), \, (-,-1,0,0,0), \, (+,1,0,0,1), $ \\ 
 $(+,1,0,0,-1), \, (+,-1,0,0,1), \, (+,-1,0,0,-1), \, (-,1,0,0,-2), \, (-,-1,0,0,2), $ \\ 
 $(+,1,0,1,0), \, (-,-1,0,1,0), \, (+,-1,0,-1,0), \, (+,1,0,-1,-1), \, (+,-1,0,1,-1), $ \\ 
 $(-,1,0,1,-2), \, (-,-1,0,1,-2), \, (-,-1,0,-1,2), \, (+,1,0,1,-3), \, (+,-1,0,-1,3), $ \\ 
 $(-,1,0,2,0), \, (+,1,0,-2,0), \, (+,-1,0,2,0), \, (-,-1,0,-2,0), \, (+,-1,0,-2,-1), $ \\ 
 $(-,1,0,2,2), \, (-,-1,0,-2,-2), \, (-,1,1,0,0), \, (-,-1,-1,0,0), \, (-,1,1,0,-1), $ \\ 
 $(+,1,-1,0,1), \, (+,-1,1,0,-1), \, (-,-1,-1,0,1), \, (+,1,-1,0,2), \, (+,1,-1,0,-2), $ \\ 
 $(+,-1,1,0,2), \, (+,-1,1,0,-2), \, (-,1,1,-1,0), \, (+,1,-1,1,0), \, (-,-1,-1,1,0), $ \\ 
 $(-,-1,-1,-1,0), \, (-,1,1,-1,-1), \, (+,-1,1,1,1), \, (-,-1,-1,1,1), \, (-,-1,-1,2,-1), $ \\ 
 $(+,1,-1,4,-1), \, (+,-1,1,-4,1), \, (+,1,2,0,0), \, (+,-1,-2,0,0), \, (-,1,-2,0,2), $ \\ 
 $(-,1,-2,0,3), \, (-,-1,2,0,-3), \, (-,1,2,0,-6), \, (-,-1,-2,0,6), \, (-,-1,-2,-1,0), $ \\ 
 $(+,1,2,1,-1), \, (+,1,2,-1,1), \, (-,1,-2,-1,-1), \, (-,1,-2,-1,-2), \, (-,-1,2,1,2), $ \\ 
 $(-,1,-2,3,0), \, (-,-1,2,-3,0), \, (-,1,2,4,-3), \, (+,1,2,-4,3), \, (+,-1,-2,4,-3), $ \\ 
 $(-,-1,-2,-4,3), \, (+,1,3,-1,1), \, (+,-1,-3,-1,-1), \, (-,1,4,-1,0), \, (-,-1,4,-4,0), $ \\ 
 $(+,1,-6,2,0), \, (+,-1,6,-2,0), \, (+,-2,0,0,0), \, (+,-2,0,0,2), \, (+,2,0,2,0), $ \\ 
 $(+,2,0,2,1), \, (+,-2,0,-2,-1), \, (-,2,1,0,0), \, (-,2,-1,0,0), \, (-,-2,1,0,0), $ \\ 
 $(-,-2,-1,0,0), \, (+,2,-1,0,1), \, (-,-2,1,0,1), \, (+,-2,1,0,-1), \, (+,2,1,0,2), $ \\ 
 $(+,-2,-1,0,-2), \, (+,2,-1,1,0), \, (+,2,-1,1,1), \, (+,2,-1,1,2), \, (-,-2,-1,1,2), $ \\ 
 $(+,2,1,3,0), \, (-,-2,2,0,-3), \, (-,2,-2,1,1), \, (-,-2,-2,-1,-1), \, (-,2,-2,1,3), $ \\ 
 $(-,2,3,0,-1), \, (-,-2,-3,0,1), \, (-,3,0,-1,0), \, (-,-3,0,1,0), \, (+,3,0,1,-5), $ \\ 
 $(-,-3,0,3,2), \, (+,3,0,4,1), \, (+,-3,0,-4,-1), \, (-,-3,1,-1,1), \, (+,3,1,-1,-3), $ \\ 
 $(+,-3,-1,1,3), \, (-,3,-1,3,2), \, (+,3,-2,1,1), \, (+,3,2,-3,-1), \, (-,-4,0,1,1), $ \\ 
 $(+,4,-1,1,0), \, (+,-4,1,-1,0), \, (-,4,-1,2,-1), \, (-,-4,1,-2,1), \, (+,4,-1,3,1), $ \\ 
 $(-,4,-3,0,0), \, (-,-4,3,0,0), \, (-,-4,-3,2,1), \, (-,4,-4,1,5), \, (+,-5,2,-1,1), $ \\ 
 $(-,5,-3,0,-1), \, (-,-5,3,0,1), \, (-,6,-3,2,0). $ \\[2mm]

\end{tabular}  \\ %

$\mathbf{2}$ &  \begin{tabular}[c]{@{}c@{}}

\\[-2mm]
 $(+,0,0,0,0), \, (-,0,0,0,1), \, (+,0,0,0,2), \, (-,0,0,1,0), \, (-,0,0,-1,0), $ \\ 
 $(-,0,0,-1,1), \, (-,0,0,-1,2), \, (+,0,0,2,0), \, (+,0,0,-2,2), \, (+,0,0,-2,-2), $ \\ 
 $(-,0,-1,0,1), \, (+,0,1,1,0), \, (-,0,-1,1,0), \, (+,0,2,0,0), \, (+,0,-2,0,0), $ \\ 
 $(-,0,2,1,-1), \, (+,0,2,2,1), \, (-,0,3,0,0), \, (-,1,0,0,0), \, (+,-1,0,0,-1), $ \\ 
 $(-,1,0,0,2), \, (+,-1,0,1,-1), \, (-,-1,-1,0,0), \, (-,1,-1,-1,0), \, (-,-1,1,-1,0), $ \\ 
 $(+,1,1,1,1), \, (-,-1,-1,3,0), \, (+,1,-2,-2,0), \, (+,1,2,3,-1), \, (+,1,3,1,1), $ \\ 
 $(-,-1,3,1,-2), \, (-,1,-3,2,0), \, (+,-1,-3,3,0), \, (-,1,4,1,0), \, (+,-2,0,0,0), $ \\ 
 $(-,2,0,1,0), \, (+,2,0,2,2), \, (-,2,0,2,-3), \, (-,2,1,0,-1), \, (+,-2,2,0,0), $ \\ 
 $(+,2,-2,0,2), \, (-,3,1,-2,-1), \, (+,3,-1,-2,-5), \, (-,4,-4,2,1). $ \\[2mm]

\end{tabular}  \\  \arrayrulecolor{black!30}\midrule

$\mathbf{3}$ & \begin{tabular}[c]{@{}c@{}}

\\[-2mm]
 $(+,0,0,0,2), \, (-,0,0,-1,1), \, (+,0,0,2,0), \, (+,0,0,-2,2), \, (-,0,1,0,1), $ \\ 
 $(+,0,-1,2,0), \, (-,0,2,1,0), \, (+,1,0,-2,2), \, (+,1,-1,0,0), \, (+,2,0,0,-1), $ \\ 
 $(-,-2,0,0,-1), \, (+,2,1,-1,-5), \, (-,2,-2,0,1), \, (-,-2,-2,1,-1),  $ \\ 
 $(-,2,2,2,-1), \, (+,-3,-1,1,0). $ \\[2mm]

\end{tabular}  \\  \midrule

$\mathbf{4}$ & \begin{tabular}[c]{@{}c@{}}

\\[-2mm]
 $(+,0,-1,0,2), \, (+,0,3,1,0), \, (-,0,-3,1,0), \, (+,1,0,-1,0), \, (+,1,0,2,0), $ \\ 
 $(-,-2,-1,1,-1), \, (+,2,5,4,0), \, (+,-3,0,0,-1). $ \\[2mm]

\end{tabular}  \\  \midrule

$\mathbf{5}$ & \begin{tabular}[c]{@{}c@{}}

\\[-2mm]
 $(+,0,1,-1,0), \, (-,0,-1,2,1), \, (-,0,-2,1,-1), \, (-,-1,0,-2,2), \, (-,-1,-1,4,1), $ \\ 
 $(+,1,-2,-1,1), \, (+,-2,0,-1,0), \, (-,3,-1,-2,1), \, (-,4,0,1,-1). $ \\[2mm]

\end{tabular}  \\  \midrule

$\mathbf{6}$ & \begin{tabular}[c]{@{}c@{}}

\\[-2mm]
 $(+,0,2,0,0), \, (-,3,0,0,-1). $ \\[2mm]

\end{tabular}  \\  %
\midrule

$\mathbf{7}$ & \begin{tabular}[c]{@{}c@{}}

\\[-2mm]
 $(-,-1,-2,3,0), \, (+,1,-3,-1,1), \, (+,-2,-2,1,0), \, (+,3,1,1,-1). $ \\[2mm]

\end{tabular}  \\  \midrule

$\mathbf{10}$ & \begin{tabular}[c]{@{}c@{}}

\\[-2mm]
 $(-,3,-2,1,-2). $ \\[2mm]

\end{tabular}  \\  \midrule

$\mathbf{11}$ & \begin{tabular}[c]{@{}c@{}}

\\[-2mm]
 $(+,8,-1,-1,-7). $ \\[2mm]

\end{tabular}  \\  \midrule

$\mathbf{15}$ & \begin{tabular}[c]{@{}c@{}}

\\[-2mm]
 $(+,2,3,4,2). $  \\[2mm]

\end{tabular}  \\ \midrule

$\mathbf{251}$ & \begin{tabular}[c]{@{}c@{}}

\\[-2mm]
 $(+,-4,10,0,-2). $ \\[2mm]

\end{tabular}  \\

\arrayrulecolor{black}\bottomrule

\end{longtable}

\end{document}